\def\blfootnote{\xdef\@thefnmark{}\@footnotetext}
\date{\today%
    \protect\blfootnote{\copyright{\ C.~L\"oh 2022}. 
    This work was supported by the CRC~1085 \emph{Higher Invariants} 
    (Universit\"at Regensburg, funded by the DFG).
    \\
    MSC~2010 classification: 57N65, 53C23, 20F65}}
\def\args{\;\cdot\;}
\newtheorem{thm}{Theorem}[section]
\newtheorem{lemma}[thm]{Lemma}
\newtheorem{prop}[thm]{Proposition}
\newtheorem{cor}[thm]{Corollary}
\theoremstyle{remark}
\newtheorem{rem}[thm]{Remark}
\theoremstyle{definition}
\newtheorem{defi}[thm]{Definition}
\newtheorem{example}[thm]{Example}
\newtheorem{question}[thm]{Question}
\newcommand{\R}{\mathbb{R}}
\newcommand{\Q}{\mathbb{Q}}
\newcommand{\Z}{\mathbb{Z}}
\newcommand{\N}{\mathbb{N}}
\DeclareMathOperator{\SV}{SV}
\DeclareMathOperator{\id}{id}
\DeclareMathOperator{\Hom}{Hom}
\DeclareMathOperator{\vol}{vol}
\DeclareMathOperator{\Mfd}{Mfd}
\DeclareMathOperator{\Gap}{Gap}
\DeclareMathOperator{\minvol}{minvol}
\DeclareMathOperator{\MGap}{\Gap_{E}}
\DeclareMathOperator{\minvolent}{minvolent}
\DeclareMathOperator{\MVEGap}{\Gap_{\minvolent}}
\DeclareMathOperator{\Riemc}{Riem_1}
\DeclareMathOperator{\FHtype}{FH}
\def\FH#1{\FHtype_{#1}}
\DeclareMathOperator{\comp}{comp}
\def\SVG#1#2{\SV_{#2}(#1)}
\def\qand{\quad\text{and}\quad}
\def\fa#1{%
  \forall_{#1}\;\;\;}
\title{The spectrum of simplicial volume\\ with fixed fundamental group}
\author{Clara L\"oh}
\begin{document}

\maketitle

\begin{abstract}
  We study the spectrum of simplicial volume for closed manifolds with
  fixed fundamental group and relate the gap problem to rationality
  questions in bounded (co)homology. In particular, we show that in
  many cases this spectrum has a gap at zero. For such groups, this
  leads to corresponding gap results for the minimal volume entropy
  semi-norm and for the minimal volume entropy in dimension~$4$.
\end{abstract}

%%%%%%%%%%%%%%%%%%%%%%%%%%%%%%%%%%%%%%%%%%%%%%%%%%%%%%%%%%%%%%%%%
\section{Introduction}

The simplicial volume of an oriented closed connected manifold is the
$\ell^1$-semi-norm of its $\R$-fundamental class~\cite{vbc}
(Section~\ref{subsec:sv}). The simplicial volume is connected to
amenability, negative curvature, and Riemannian volume
estimates~\cite{vbc}.

\begin{defi}[spectrum of simplicial volume]
  Let $d\in \N$ and let $\Mfd_d$ denote the class of all oriented
  closed connected $d$-manifolds. 
  The \emph{spectrum of simplicial volume in dimension~$d$}
  is the set
  \[ \SV(d) := \bigl\{ \|M\| \bigm| M \in \Mfd_d \bigr\} \subset \R_{\geq 0}.
  \]
  Given a group~$\Gamma$, we write
  \[ \SVG d \Gamma := \bigl\{ \|M\| \bigm| M \in \Mfd_d,\ \pi_1(M) \cong \Gamma \bigr\}
  \subset \R_{\geq 0}.
  \]
\end{defi}

A subset~$V \subset \R_{\geq 0}$ \emph{has a gap at~$0$} if
there exists a~$c\in \R_{>0}$ with~$V \cap (0,c) = \emptyset$. 
The sets~$\SV(d)$ are known not to have a gap at
zero whenever~$d \geq 4$ (Section~\ref{subsec:spectrumsv}).
However, the problem is open for the spectrum with 
fixed fundamental group:

\begin{question}[gap problem with fixed fundamental group]\label{q:fixedpi1}
  Let $d \in \N$ and let $\Gamma$ be a finitely presented group
  with~$\dim_\R H_d(\Gamma;\R) < \infty$. Does the set~$\SVG d \Gamma$
  have a gap at zero?
\end{question}

Fundamental groups of closed manifolds are finitely presented. In the
context of homological properties of groups, it is reasonable to
further restrict the class of groups: We say that a group~$\Gamma$ has
\emph{type~$\FH d$} if it is finitely presented and satisfies~$\dim_\R
H_d(\Gamma;\R) < \infty$.

In the present article, we give partial positive answers to
Question~\ref{q:fixedpi1} and put this problem into a geometric
context.

%%%%%%%%%
\subsection{The spectrum of simplicial volume}\label{subsec:spectrumsv}

We first recall known results on the spectrum of simplicial volume.
On the one hand, we have generic structural results:

\begin{thm}[general structure~\protect{\cite[Remark~2.3]{heuerloeh_4mfd}%
      \cite[Theorem~B/E]{heuerloeh_trans}}]
  Let $d\in \N$.
  \begin{enumerate}
  \item The set~$\SV(d)$ is countable and closed under addition.
  \item The set $\SV(d)$ is contained in the set of right-computable
    real numbers; in particular, if $A \subset \N$ is
    a subset that is recursively enumerable but not recursive, then~$\sum_{n \in
      A}2^{-n}$ is \emph{not} in~$\SV(d)$.
  \end{enumerate}
\end{thm}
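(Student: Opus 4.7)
The plan is to prove the three assertions in turn, obtaining the ``in particular'' claim as a direct corollary of right-computability.

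For countability, I would use that $\|M\|$ is an oriented homotopy invariant, since it is the $\ell^1$-semi-norm of the singular real fundamental class. As every closed manifold has the homotopy type of a finite CW complex (smoothable manifolds have finite triangulations; topological manifolds are ANRs with finitely generated homotopy type) and there are only countably many finite CW homotopy types, $\SV(d)$ is countable. For closure under addition, I would invoke Gromov's additivity of simplicial volume under connected sum, $\|M \# N\| = \|M\| + \|N\|$, valid for oriented closed connected manifolds of dimension $d \geq 3$; in dimensions $d \in \{0,1,2\}$ the spectrum is $\{0\}$ and the statement is trivial.

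For right-computability, I would fix $M \in \Mfd_d$ together with a finite simplicial triangulation~$K$. Simplicial and singular $\ell^1$-semi-norms of the fundamental class coincide, and the infimum in the definition of~$\|M\|$ may be taken over rational simplicial fundamental cycles in~$K$: rational chains are $\ell^1$-dense in real chains, the boundary operator and the $\ell^1$-norm are continuous, and the (finite-dimensional) space of real fundamental cycles is an affine subspace defined by rational linear equations. The simplicial chain complex of~$K$ in degree~$d$ and its boundary map are finitely presented computable objects, so one can effectively enumerate rational fundamental cycles of~$K$ and compute their $\ell^1$-norms, producing a computable sequence of rational upper bounds whose infimum equals $\|M\|$. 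This exhibits $\|M\|$ as a right-computable real.

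For the final assertion I would apply two standard facts from computability theory: a real number is computable if and only if it is both left- and right-computable; and for an r.e.\ set $A \subset \N$, the number $\sum_{n \in A} 2^{-n}$ is always left-computable (partial sums along any enumeration of~$A$ form a computable increasing rational sequence with this limit) and is computable if and only if~$A$ is recursive. Combined with the previous step, this forces $\sum_{n \in A} 2^{-n} \notin \SV(d)$ whenever $A$ is r.e.\ but not recursive.

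The main obstacle is the right-computability step: one needs a clean argument that the infimum over $\R$-fundamental cycles is realised as a limit over \emph{rational} fundamental cycles (a density statement that must be carried out in the $\ell^1$-topology on a finite-dimensional affine subspace defined by rational equations) and an effective enumeration of those cycles. The remaining ingredients — homotopy invariance and Gromov's additivity — are classical and can be cited, and the final deduction is a direct application of textbook computability theory.
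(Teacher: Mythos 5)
First, note that the paper does not prove this theorem at all: it is quoted from \cite[Remark~2.3]{heuerloeh_4mfd} and \cite[Theorem~B/E]{heuerloeh_trans}, so your proposal can only be measured against those arguments. Your countability argument and the final computability-theoretic deduction (left- plus right-computable implies computable; $\sum_{n\in A}2^{-n}$ is left-computable for r.e.\ $A$ and computable iff $A$ is recursive) are correct and standard. Two smaller inaccuracies: $\SV(2)$ is not $\{0\}$ but $\{0\}\cup\{4g-4 \mid g\geq 2\}$ (still closed under addition, but not for the reason you give --- connected-sum additivity actually \emph{fails} in dimension~$2$, since $\|\Sigma_g \# \Sigma_h\| = \|\Sigma_g\|+\|\Sigma_h\|+4$ for $g,h\geq 1$), so the low-dimensional cases need to be checked by hand rather than waved away.

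The genuine gap is in the right-computability step, and it is exactly where you flag the ``main obstacle'': your claim that the singular $\ell^1$-semi-norm of the fundamental class coincides with the infimum over \emph{simplicial} fundamental cycles of a \emph{fixed} finite triangulation~$K$ is false. The inclusion of simplicial into singular chains only gives $\|[M]\|_1 \leq \inf$ over simplicial cycles; the reverse inequality fails badly. The simplest counterexample is $S^1$ triangulated by three edges: the only simplicial $1$-cycles representing the fundamental class are its integer multiples (there are no $2$-simplices to provide boundaries), so the fixed-triangulation infimum is~$3$, while $\|S^1\|=0$. More generally, the infimum over the finite-dimensional affine space of simplicial fundamental cycles of a fixed $K$ is attained and typically strictly larger than~$\|M\|$; rational density inside that finite-dimensional space does not help, because the space itself is not $\ell^1$-dense (nor even cofinal in norm) among singular fundamental cycles. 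The actual proof in \cite{heuerloeh_trans} has to range over an effectively enumerable family that \emph{is} dense in the relevant sense --- e.g.\ rational cycles supported on simplicial maps from iterated subdivisions of the standard simplex into a finite model of~$M$, using simplicial approximation to show that every singular fundamental cycle can be replaced, with arbitrarily small increase of the $\ell^1$-norm, by one from this countable computable family. Without that additional idea, your enumeration produces rational upper bounds for the wrong quantity, and right-computability of $\|M\|$ does not follow.
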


On the other hand, classification results in low dimensions
and stable commutator length, respectively, can be used to exhibit
concrete real numbers as simplicial volumes:

\begin{thm}[(no) gap~\protect{\cite[Example~2.4/2.5, Theorem~A]{heuerloeh_4mfd}}]
  \hfil
  \begin{enumerate}
  \item The sets~$\SV(0), \dots, \SV(3)$ have a gap at zero.
  \item If $d \in \N_{\geq 4}$, then $\SV(d)$ is dense in~$\R_{\geq
    0}$.
  \end{enumerate}  
\end{thm}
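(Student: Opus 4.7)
The plan is to handle the two parts independently, using classification theorems for (1) and the stable commutator length machinery for (2).

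For (1), the cases $d \in \{0,1\}$ are trivial from classification: the only oriented closed connected $0$-manifold is a point, so $\SV(0) = \{1\}$, and the only such $1$-manifold is~$S^1$, which has vanishing $\ell^1$-semi-norm, so $\SV(1) = \{0\}$. For $d = 2$, I would apply the classification of closed oriented surfaces together with the standard computation $\|\Sigma_g\| = 4\max(g-1,0)$ to conclude $\SV(2) = \{0\} \cup 4 \cdot \N_{\geq 1}$, which is clearly discrete at~$0$. The case $d = 3$ is the deepest: here I would invoke Perelman's geometrization theorem together with the Gromov--Thurston formula, which expresses $\|M\|$ as $\tfrac{1}{v_3}$ times the sum of the hyperbolic volumes of the geometric pieces of~$M$ (with $v_3$ the volume of the regular ideal tetrahedron in~$\mathbb{H}^3$). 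The Jørgensen--Thurston theorem guarantees that the set of volumes of complete, finite-volume hyperbolic $3$-manifolds has a positive minimum, which translates directly into the desired gap at~$0$.

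For (2), the heart of the matter is dimension~$4$. I would use the Heuer--Löh construction, which to every commutator~$g$ in a suitable group associates a closed oriented $4$-manifold whose simplicial volume equals~$4 \scl(g)$; since the set~$\{4 \scl(g)\}$ is known to be dense in~$\R_{\geq 0}$ (Bavard duality arguments produce scl-values accumulating densely), this settles density of~$\SV(4)$. To bootstrap to $d \geq 5$, I would take products of such $4$-manifolds with a closed oriented manifold of positive simplicial volume in the complementary dimension (e.g.~a hyperbolic $(d-4)$-manifold, or a product of such with surfaces), using the multiplicativity bounds
\[
\|M\| \cdot \|N\| \;\leq\; \|M \times N\| \;\leq\; \binom{m+n}{m} \|M\| \cdot \|N\|
\]
together with additivity of simplicial volume under connected sum in dimensions~$\geq 3$ to realize a dense subset at each level.

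The main obstacle is clearly the density statement in dimension~$4$: it requires the genuine scl-to-$4$-manifold construction of Heuer--Löh, which is not a formal consequence of classical results. By contrast, the gap cases, while relying on deep structure theory (especially geometrization in dimension~$3$), are then essentially routine; and the passage from $d=4$ to higher dimensions is a soft product/connected-sum argument provided one is content to inherit density rather than prove it from scratch in each dimension.
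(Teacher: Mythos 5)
The paper does not prove this theorem itself; it quotes it from the cited reference (Examples~2.4/2.5 there for part~(1), Theorem~A there for part~(2)). Your part~(1) follows exactly the expected route (classification in dimensions $0,1,2$; geometrisation, Gromov--Thurston proportionality and J{\o}rgensen--Thurston in dimension~$3$) and is correct. Your dimension-$4$ argument in part~(2) is also the right idea: small positive values from the scl construction plus closure of $\SV(4)$ under addition (connected sums in dimension $\geq 3$) give density. The precise constant ($4\scl(g)$ versus the actual normalisation and the bounded integer multiple $K\in\{1,\dots,K_4\}$ coming from Thom realisation) is off, but this is harmless for density.

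There is, however, a genuine gap in your bootstrapping step at $d=5$. You propose to take products $M^4\times N^{d-4}$ with $N$ a closed oriented $(d-4)$-manifold of positive simplicial volume. For $d=5$ the complementary dimension is~$1$, and the only oriented closed connected $1$-manifold is $S^1$, whose simplicial volume vanishes; hence $\|M^4\times S^1\|=0$ and your products contribute nothing to $\SV(5)\cap(0,\infty)$. (For $d\geq 6$ the argument works: hyperbolic $(d-4)$-manifolds exist, the product estimates keep the values positive and arbitrarily small, and connected-sum additivity upgrades this to density.) The standard repair --- and the route of the cited reference --- is to work one degree lower: produce integral classes $\alpha\in H_2(\Gamma;\R)$ with $0<\|\alpha\|_1$ arbitrarily small via scl, take the homological cross product with the fundamental class of a closed hyperbolic $(d-2)$-manifold (these exist for all $d\geq 4$), use the cross-product estimates for $\|\cdot\|_1$ to keep the resulting degree-$d$ class of small positive semi-norm, and then apply normed Thom realisation (Theorem~\ref{thm:normedthom}) in dimension~$d$ to realise it by a closed $d$-manifold up to a bounded multiplicative factor; density again follows from additivity under connected sums.
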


The most specific information is available in dimension~$4$:

\begin{thm}[dimension~$4$]\label{thm:dim4}
 The set~$\SV(4)$ contains
 \begin{itemize}
 \item all non-negative rationals~\cite[Theorem~B]{heuerloeh_4mfd};
 \item a dense set of transcendental numbers that
      is linearly independent over the field of algebraic
      numbers~\cite[Theorem~A, Theorem~C]{heuerloeh_trans};
 \item certain irrational algebraic numbers~\cite[Theorem~1.10]{ffflodha}.
 \end{itemize}
\end{thm}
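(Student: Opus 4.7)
The three items correspond to three separate constructions in three different papers, but all share a common engine: the translation of questions about~$\SV(4)$ into questions about stable commutator length~$\scl$, via an \emph{scl-to-simplicial-volume} construction in dimension~$4$. The plan is to exhibit, for each target value~$v$ in the claimed sets, an oriented closed connected $4$-manifold~$M$ whose fundamental class is filled efficiently by surface chains arising from controlled homomorphisms into groups where~$\scl$ can be computed exactly.

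First I would recall the Heuer--Löh construction: starting from a rational chain~$c$ in the commutator subgroup of a surface group~$\pi_1(\Sigma_g)$, one builds a $4$-manifold via a surface-bundle-type gluing whose simplicial volume equals~$4 \cdot \scl(c)$ up to an explicit normalization. For part~(1), I would combine this with the classical theorem that~$\scl$ on non-abelian free groups attains every non-negative rational value, together with the already recalled closure of~$\SV(4)$ under addition, in order to realize all of~$\Q_{\geq 0}$ in~$\SV(4)$.

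For part~(2), the refinement required is that one can feed into the same construction chains whose~$\scl$ attains a dense set of transcendental numbers; control over linear independence over~$\overline{\Q}$ is established on the quasi-morphism side via Bavard duality, after choosing an appropriately rich family of homogeneous quasi-morphisms. For part~(3), the Fournier-Facio--Lodha explicit~$\scl$ computations in suitable groups (certain Thompson-like or wreath-product constructions) produce specific irrational algebraic numbers, e.g.\ quadratic surds, which can then be passed through the same scl-to-simplicial-volume machine.

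The main obstacle in each case is therefore the \emph{exact} computation of~$\scl$: upper bounds come from explicit admissible surface fillings, but the matching lower bounds require constructing extremal quasi-morphisms and invoking Bavard duality; this is the step where the arithmetic nature (rational, transcendental, or algebraic-irrational) of the resulting value actually gets pinned down. Once the~$\scl$ values are fixed, the scl-to-simplicial-volume construction is essentially formal and yields the claimed membership in~$\SV(4)$.
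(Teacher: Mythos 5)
This statement is not proved in the paper at all: Theorem~\ref{thm:dim4} is a survey-style recollection, and the paper's ``proof'' consists entirely of the three citations attached to the items. So there is no internal argument to compare yours against; what can be assessed is whether your sketch would actually establish the cited results. Your identification of the common engine --- an $\scl$-to-simplicial-volume construction producing $4$-manifolds whose simplicial volume is an explicit multiple of the $\ell^1$-norm of a degree-$2$ class, itself computed via $\scl$ and Bavard duality --- is the right general mechanism behind all three items.

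However, item~(1) as you argue it has a genuine gap: it is \emph{false} that $\scl$ on non-abelian free groups attains every non-negative rational. By the Duncan--Howie gap theorem, every element of a free group that has nonzero $\scl$ has $\scl\geq 1/2$, so free groups alone cannot produce small positive values; and closure of $\SV(4)$ under addition cannot repair this, since sums of numbers bounded below by a positive constant are never arbitrarily small, whereas $\Q_{\geq 0}$ contains arbitrarily small positive rationals. The actual realization of all of $\Q_{\geq 0}$ requires passing to other finitely presented groups (and chains/classes) where small rational $\scl$-values occur --- the paper's own Example~\ref{exa:infnogap} implicitly records that one needs a \emph{varying} family of groups~$\Gamma_n$ to obtain integral classes with $0<\|\alpha_n\|_1<1/n$. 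A similar issue affects item~(2): the linear independence over the field of algebraic numbers is not something one arranges by ``choosing an appropriately rich family of quasi-morphisms''; it is a number-theoretic property of the specific transcendental $\scl$-values that the construction outputs, and pinning it down is a separate arithmetic argument, not a by-product of Bavard duality. So while the architecture of your sketch matches the cited literature, the steps where the arithmetic nature of the values is supposed to be established are either wrong (item~1) or missing (item~2).
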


The constructions from Theorem~\ref{thm:dim4} can be performed with
fundamental groups with a bounded number of generators and
relations~\cite[Section~8.4]{heuerloeh_4mfd}, but it is not clear from
the constructions whether it is possible to fix the group.

In contrast to the closed case, the spectrum of the (locally
finite) simplicial volume of oriented connected not necessarily
compact manifolds without boundary in dimensions~$\geq 4$
coincides with~$\R_{\geq 0} \cup \{\infty\}$~\cite{heuerloeh_lf}.

%%%%%%%%%
\subsection{Gaps and rationality}

We show that the gap behaviour of a given fundamental group is driven
by the rationality properties of the zero-norm subspace of singular
homology.

\begin{defi}
  Let $d \in \N$ and let $X$ be a topological space or a group.
  \begin{itemize}
  \item
    Then we
    write
    \begin{align*}
      N_d(X;\R)
      &:= \bigl\{ \alpha \in H_d(X;\R) \bigm| \|\alpha\|_1 = 0 \bigr\}
      \subset H_d(X;\R),
      \\
      B^d(X;\R)
      & := \bigl\{ \varphi \in H^d(X;\R) \bigm| \text{$\varphi$ is bounded} \bigr\}
      \subset H^d(X;\R).
    \end{align*}
  \item
    A subspace~$V \subset H_d(X;\R)$ is \emph{rational} if $V \cap H_d(X;\Q)$
    generates~$V$ over~$\R$.
    A subspace $V \subset H^d(X;\R)$ is \emph{rational} if $V \cap H^d(X;\Q)$
    generates~$V$ over~$\R$.
  \end{itemize}
\end{defi}
  
\begin{thm}[Section~\ref{sec:ratchar}]\label{thm:ratchar}
  Let $d\in \N_{\geq 4}$ and let $\Gamma$ be a group of type~$\FH d$.
  Then the following are equivalent:
  \begin{enumerate}
  \item\label{i:gap}
    The set $\SVG d \Gamma$ has a gap at zero.
  \item\label{i:groupgap}
    The set
   $ \bigl\{ \|\alpha\|_1
       \bigm| \text{$\alpha \in H_d(\Gamma;\R)$ is integral} \bigr\}
   $
    has a gap at zero.   
  \item\label{i:ratzero}
    The subspace~$N_d(\Gamma;\R)$ is rational in~$H_d(\Gamma;\R)$.
  \item\label{i:ratbounded}
    The subspace~$B^d(\Gamma;\R)$ is rational in~$H^d(\Gamma;\R)$.
  \end{enumerate}
\end{thm}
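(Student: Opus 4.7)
The plan is to prove the cycle of equivalences~(4)~$\Leftrightarrow$~(3)~$\Leftrightarrow$~(2)~$\Leftrightarrow$~(1), drawing on three different inputs: Hahn-Banach duality, lattice theory in Euclidean space, and a geometric realisation of integral classes by manifolds with prescribed fundamental group. Throughout, the type~$\FH d$ hypothesis ensures that $V := H_d(\Gamma;\R)$ and $H^d(\Gamma;\R)$ are finite-dimensional and carry compatible $\Q$-structures inherited from rational (co)homology.

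For~(3)~$\Leftrightarrow$~(4), the duality principle for bounded cohomology (a Hahn-Banach argument) identifies $N_d(\Gamma;\R)$ with the annihilator of $B^d(\Gamma;\R)$ under the Kronecker pairing $H^d(\Gamma;\R)\times H_d(\Gamma;\R)\to\R$. Because this pairing is perfect and $\Q$-rational, taking annihilators preserves rationality, so $N_d(\Gamma;\R)$ is rational iff $B^d(\Gamma;\R)$ is. For~(2)~$\Leftrightarrow$~(3), let $\Lambda\subset V$ be the image of $H_d(\Gamma;\Z)$, a finitely generated subgroup spanning $V$ over~$\R$; the seminorm $\|\cdot\|_1$ descends to an honest norm on $V/N_d(\Gamma;\R)$, so the set in~(2) equals $\{\|\pi(\alpha)\|:\alpha\in\Lambda\}$ and has a gap at zero iff $\pi(\Lambda)$ is discrete in $V/N_d(\Gamma;\R)$. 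By the structure theorem for finitely generated subgroups of~$\R^n$, this discreteness is equivalent to $\Lambda\cap N_d(\Gamma;\R)$ spanning $N_d(\Gamma;\R)$ over~$\R$, hence to the rationality of $N_d(\Gamma;\R)$.

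The equivalence~(1)~$\Leftrightarrow$~(2) is the geometric heart of the theorem. One inclusion comes for free from Gromov's mapping theorem: for any closed oriented $d$-manifold $M$ with $\pi_1(M)\cong\Gamma$, the classifying map $c:M\to B\Gamma$ induces an isomorphism on $\pi_1$, so $\|M\|=\|c_*[M]\|_1$ with $c_*[M]$ integral in $H_d(\Gamma;\R)$; thus $\SVG d \Gamma\subset\{\|\alpha\|_1:\alpha\in H_d(\Gamma;\R)\text{ integral}\}$ and a gap on the right forces a gap on the left. For the converse, I would prove a realisation statement: in dimension $d\geq 4$ there is a universal constant $K_d\in\N_{>0}$ such that every $\alpha\in H_d(\Gamma;\Z)$ is of the form $c_*[M]=K_d\cdot\alpha$ for some closed oriented $d$-manifold $M$ with $\pi_1(M)\cong\Gamma$. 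Combined with the mapping theorem this gives $\|M\|=K_d\cdot\|\alpha\|_1$, so integral classes of arbitrarily small positive norm yield manifolds in $\SVG d \Gamma$ of arbitrarily small positive simplicial volume.

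The main obstacle is this realisation. The approach is to combine Thom's bordism theorem (which furnishes $K_d$ and a closed oriented $d$-manifold $N$ with a map $N\to B\Gamma$ representing $K_d\cdot\alpha$, of possibly wrong fundamental group) with a surgery step: take the connected sum of $N$ with a closed $d$-manifold $M_0$ of fundamental group~$\Gamma$ (which exists since $\Gamma$ is finitely presented and $d\geq 4$), so that $\pi_1(M_0\#N)\cong\Gamma\ast\pi_1(N)$, and then perform $1$-surgeries along loops generating the $\pi_1(N)$ factor to cut $\pi_1$ down to~$\Gamma$. Each such surgery is a cobordism over $B\Gamma$, leaving $c_*[\cdot]\in H_d(\Gamma;\Z)$ unchanged. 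The hypothesis $d\geq 4$ is precisely what allows these $1$-surgeries to adjust $\pi_1$ without affecting top-dimensional homology.
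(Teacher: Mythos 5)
Your overall architecture is exactly the paper's: (1)$\Leftrightarrow$(2) via Gromov's mapping theorem plus a normed Thom realisation, (2)$\Leftrightarrow$(3) via lattice theory in $H_d(\Gamma;\R)/N_d(\Gamma;\R)$, and (3)$\Leftrightarrow$(4) via the duality principle and the fact that annihilators with respect to a $\Q$-rational perfect pairing preserve rationality. The only genuine differences are that the paper imports the realisation statement as a black box (with $\|M\|=K\cdot\|\alpha\|_1$ for some $K\in\{1,\dots,K_d\}$ depending on $\alpha$) whereas you sketch its proof, and that the paper proves (2)$\Rightarrow$(3) by an explicit simultaneous Dirichlet approximation whereas you appeal to discreteness criteria for subgroups of $\R^n$; both routes work.

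Two points in your sketch need repair as written. First, in the realisation step the connected sum satisfies $c_*[M_0\# N]=(c_{M_0})_*[M_0]+K_d\cdot\alpha$, so with an arbitrary $M_0$ you only realise $K_d\alpha$ up to the fixed error term $(c_{M_0})_*[M_0]$, which destroys the identity $\|M\|=K_d\cdot\|\alpha\|_1$ that your gap transfer requires; you must choose $M_0$ with $(c_{M_0})_*[M_0]=0$, e.g.\ the standard model obtained by surgering relator curves in $\#_k(S^1\times S^{d-1})$, whose fundamental class dies in $H_d(B\Gamma;\Z)$. Relatedly, you should surger curves representing $g_i\cdot f_*(g_i)^{-1}$ (which lie in the kernel of $\pi_1(M_0\# N)\to\Gamma$, so the reference map extends over the surgery trace) rather than generators of the $\pi_1(N)$ factor itself, since $f_*$ need not kill those and otherwise the trace does not map to $B\Gamma$. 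Second, in (2)$\Leftrightarrow$(3) the asserted equivalence between discreteness of $\pi(\Lambda)$ and $\Lambda\cap N_d(\Gamma;\R)$ spanning $N_d(\Gamma;\R)$ is false for a general finitely generated spanning subgroup (consider $\Z+\sqrt2\,\Z\subset\R$ with trivial null space); you need that $\Lambda$, the image of $H_d(\Gamma;\Z)$, is itself discrete, i.e.\ a genuine lattice, which holds by the universal coefficient theorem and which the paper records explicitly. Both repairs are routine, so the proposal is essentially sound.
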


The proof in Section~\ref{sec:ratchar} shows that 
the implication~\ref{i:groupgap}. $\Longrightarrow$ \ref{i:gap}.  as
well as the equivalence of the properties \ref{i:groupgap}.,
\ref{i:ratzero}., and \ref{i:ratbounded}. also hold for~$d \in
\{0,1,2,3\}$. % for 2,3,4: and for general groups.

% irrational \emph{values} of the $\ell^1$-semi-norm are possible on the integral lattice!

The rationality property~\ref{i:ratbounded}. is related to
a problem of Frigerio and Sisto in the context of quasi-isometrically
trivial extensions~\cite[Question~16]{frigeriosisto}. 
%In this context, finite-dimensionality is essential.

%%%%%%%%%
\subsection{Examples}\label{subsec:introexas}

The characterisation in Theorem~\ref{thm:ratchar} allows us to
establish that many groups admit a positive answer to
Question~\ref{q:fixedpi1}.  Let $d\in \N$. We write~$\Gap(d)$ for the
class of all groups~$\Gamma$ of type~$\FH d$ such that $\SVG d \Gamma$
has a gap at zero.

If~$N_d(\args;\R)$ is trivial or the full homology, then
$N_d(\args;\R)$ is rational in~$H_d(\args;\R)$ (and similarly for
cohomology). Therefore, we obtain:

\begin{example}[base cases] 
  Let $d\in \N_{\geq 4}$. The class~$\Gap(d)$ contains the following
  groups: 
  \begin{itemize}
  \item all amenable groups of type~$\FH d$ because they have trivial
    bounded cohomology~\cite{vbc,ivanov} (and thus~$\SVG d \Gamma =
    \{0\}$~\cite{vbc});
  \item more generally, all boundedly acyclic groups of type~$\FH d$;
    this includes the Thompson group~$F$~\cite{monod_F};
  \item all hyperbolic groups because they are of finite type
    and the $\ell^1$-semi-norm is a norm by the duality
    principle and Mineyev's results~\cite{mineyev};
  \item all finitely presented groups with~$\dim_\R H_d(\Gamma;\R) \leq 1$;
  \item all groups~$\Gamma$ of type~$\FH d$ whose comparison
    map~$H_b^d(\Gamma;\R) \longrightarrow H^d(\Gamma;\R)$ is trivial;
    this includes all groups of type~$\FH d$ whose classifying
    space admits an amenable open cover of multiplicity at
    most~$d$~\cite{vbc,ivanov,loehsauer_amenable}.
    Good bounds for such amenable multiplicities are, e.g., known for
    right-angled Artin groups~\cite{li_RAAG}. More generally,
    one can also consider multiplicities of (uniformly) boundedly acyclic
    open covers~\cite{ivanov_covers,liloehmoraschini}.
  \end{itemize}
\end{example}

\begin{example}[Thompson group~$T$]
  The Thompson group~$T$ lies in~$\Gap(d)$ for all~$d \in \N_{\geq 4}$:
  It is well-known that $T$ is finitely presented and has
  finite-dimensional cohomology in every degree~\cite{ghyssergiescu}.
  Moreover, $B^*(T;\R)$ is generated by the cup-powers of the
  Euler class~\cite{ffflmm_T,monodnariman}. Because the Euler class
  is rational, we see that $B^*(T;\R)$ is rational. We can thus
  apply Theorem~\ref{thm:ratchar} to conclude.
\end{example}

We have the following inheritance properties (proofs
are given in Section~\ref{sec:inherit}): 

\begin{example}[inheritance properties]
  For~$d \in \N_{\geq 4}$, we have:
  \begin{itemize}
  \item The class~$\Gap(d)$ is closed under taking (finite)
    free products.

    More generally, there is an inheritance principle
    for graphs of groups with amenable edge groups and
    vertex groups in~$\Gap(d)$ (Lemma~\ref{lem:graphs}).
  \item Let $\Gamma \in \bigcap_{k \in \{2,\dots, d\}} \Gap(k)$
    and $\Lambda \in \bigcap_{k \in \{2,\dots,d\}} \Gap(k)$. Then
    \[ \Gamma \times \Lambda \in \Gap(d).
    \]
  \item
    If $\Gamma$ is a group that contains a finite index
    subgroup in~$\Gap(d)$, then also~$\Gamma \in \Gap(d)$.
  \item
    Let $1 \longrightarrow A \longrightarrow \Gamma \longrightarrow
    \Lambda \longrightarrow 1$ be an extension of groups with
    boundedly acyclic (e.g., amenable) kernel~$A$. If~$\Lambda \in
    \Gap(d)$ and $\Gamma$ is of type~$\FH d$, then~$\Gamma \in
    \Gap(d)$.
  \item More generally: Let $f \colon \Gamma \longrightarrow \Lambda$
    be a group homomorphism that induces a surjection~$H^d_b(f;\R)
    \colon H^d_b(\Lambda;\R) \longrightarrow H^d_b(\Gamma;\R)$. If
    $\Lambda \in \Gap(d)$ and $\Gamma$ is of type~$\FH d$, then
    also~$\Gamma \in \Gap(d)$.
  \end{itemize}
\end{example}

However, it remains an open problem whether for all groups~$\Gamma$ of
type~$\FH d$ the space~$N_d(\Gamma;\R)$ is rational or not.

If we drop the finiteness conditions, then, in general, we cannot
expect a gap on integral classes:

\begin{example}\label{exa:infnogap}
  There exists a countable group~$\Gamma$ such that $\{\|\alpha\|_1
  \mid \alpha \in H_2(\Gamma;\R) \text{ is integral}\}$ has \emph{no}
  gap at zero: For each~$n \in \N_{>0}$, there exists a finitely
  presented group~$\Gamma_n$ with an integral class~$\alpha_n \in
  H_2(\Gamma_n;\R)$ satisfying
  \[ 0 < \| \alpha_n \|_1 < \frac 1n;
  \]
  for example, such groups and elements can be constructed via stable
  commutator length~\cite[Theorem~C]{heuerloeh_4mfd}.  Then the
  infinite free product~$\Gamma$ of the~$(\Gamma_n)_{n \in \N}$ has
  the claimed property. Clearly, this example~$\Gamma$ is not finitely
  generated and $\dim_\R H_2(\Gamma;\R) = \infty$.

  Taking products with fundamental groups of oriented closed connected
  hyperbolic manifolds and the standard cross-product estimates
  for~$\|\cdot\|_1$~\cite[Proposition~2.9]{heuerloeh_4mfd} show that
  such examples also exist in all degrees~$\geq 4$.
\end{example}

%%%%%%%%%
\subsection{Gap phenomena for geometric volumes}

In dimensions~$d\geq 4$, it does not seem to be known whether the
set of minimal volumes of all oriented closed connected smooth
$d$-manifolds has a gap at~$0$ or not. For a smooth manifold~$M$, the
\emph{minimal volume} is defined by
\[ \minvol (M) := \inf \bigl\{ \vol(M,g) \bigm| g \in \Riemc(M) \bigr\}, 
\]
where $\Riemc(M)$ denotes the set of all complete Riemannian metrics
on~$M$ whose sectional curvature lies everywhere in~$[-1,1]$.  The
following connections with the simplicial volume are
classical~\cite[Section~0.5]{vbc}:
\begin{itemize}
\item \emph{Main inequality.}
  For all oriented closed connected smooth $d$-manifolds~$M$,
  we have
  \[ \|M \| \leq (d-1)^d \cdot d! \cdot \minvol(M).
  \]
\item \emph{Isolation theorem.}
  For each~$d \in \N$, there exists a constant~$\varepsilon_d \in \R_{>0}$
  with the following isolation property: If $M$ is an oriented closed
  connected smooth $d$-manifold with~$\minvol(M) < \varepsilon_d$, then
  $\|M\| = 0$.
\end{itemize}
It is not known whether the vanishing of simplicial volume
implies the vanishing of the minimal volume. Therefore, the gap results
from Section~\ref{subsec:introexas} do not directly give gap results
for the minimal volume with fixed fundamental group.

Similarly, the corresponding gap problem for the minimal volume
entropy is open. For~$d\in \N$, we write~$\MVEGap(d)$ for the class of
all groups~$\Gamma$ of type~$\FH d$ such that the set of minimal
volume entropies~$\minvolent(M)$ of oriented closed connected smooth
$d$-manifolds~$M$ with fundamental group isomorphic to~$\Gamma$ has a
gap at~$0$.  In dimension~$4$, gaps for simplicial volume lead to gaps
for minimal volume entropy:

\begin{cor}[minimal volume entropy gaps in dimension~$4$]
  \hfil
  \begin{enumerate}
  \item We have~$\Gap(4) \subset \MVEGap(4)$.
  \item In particular, all the examples of groups in~$\Gap(4)$ listed
    in Section~\ref{subsec:introexas} lie in~$\MVEGap(4)$.
  \end{enumerate}
\end{cor}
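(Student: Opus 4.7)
The plan is to transfer the simplicial-volume gap for $\Gamma \in \Gap(4)$ into a gap for $\minvolent$ by means of Gromov's main inequality between simplicial volume and minimal volume entropy. For every oriented closed connected smooth $n$-manifold~$M$ one has
\[ \|M\| \leq n! \cdot \minvolent(M)^n, \]
which in dimension~$n=4$ specialises to $\|M\| \leq 24 \cdot \minvolent(M)^4$. If $\Gamma \in \Gap(4)$ with gap constant~$c > 0$, then every oriented closed connected smooth $4$-manifold~$M$ with $\pi_1(M) \cong \Gamma$ satisfies $\|M\| \in \{0\} \cup [c,\infty)$, and in the nonzero case the inequality immediately yields $\minvolent(M) \geq (c/24)^{1/4}$. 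This handles the regime $\|M\| > 0$ essentially for free.

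The delicate point is the case $\|M\| = 0$: one must rule out that $\minvolent(M)$ takes arbitrarily small positive values on such manifolds. I would settle this by invoking a dimension-$4$ specific comparison of simplicial volume with minimal volume entropy that forces the vanishing of $\|M\|$ to imply the vanishing of $\minvolent(M)$. The natural vehicle is the minimal volume entropy semi-norm on $H_4(\args;\R)$ that is advertised in the abstract: in dimension~$4$ one shows this semi-norm is, on fundamental classes, proportional to $\minvolent(M)^4$ and bounded above by a constant times the simplicial-volume semi-norm, so that both vanishing and gap behaviour transfer from~$\|\cdot\|$ to~$\minvolent$. This dimension-$4$ comparison between the two semi-norms is the main obstacle in the argument, and the fact that the corollary is stated only in dimension~$4$ reflects precisely the dimension at which such a comparison is available.

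Once the zero-simplicial-volume case is absorbed, part~(1) follows by combining the two regimes: for $\Gamma \in \Gap(4)$ the set of minimal volume entropies of oriented closed connected smooth $4$-manifolds with fundamental group~$\Gamma$ is contained in $\{0\} \cup [(c/24)^{1/4},\infty)$, which is the asserted gap at~$0$, hence $\Gamma \in \MVEGap(4)$. Part~(2) is then an immediate unwinding: each group listed in Section~\ref{subsec:introexas} belongs to~$\Gap(4)$, so by~(1) it belongs to~$\MVEGap(4)$.
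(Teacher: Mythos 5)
Your handling of the regime $\|M\|>0$ is sound and matches the paper's first ingredient: Gromov's upper bound for $\|M\|$ in terms of $\minvolent(M)$ converts the simplicial-volume gap~$c$ into a uniform positive lower bound on $\minvolent(M)$ for every $M$ with $\|M\|\neq 0$. (The paper phrases this as a \emph{linear} upper bound, which corresponds to the normalisation $\minvolent(M)=\inf_g \lambda(g)^4\cdot\vol(M,g)$; your power-of-four version is the other common normalisation, and either one works for this step.)

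The genuine gap is in your treatment of the case $\|M\|=0$, which you yourself flag as the delicate point. You propose to close it by asserting that the volume entropy semi-norm of Babenko--Sabourau is, on fundamental classes of $4$-manifolds, \emph{proportional} to $\minvolent(M)^4$. That proportionality is not a known fact and is not what the equivalence of $\|\cdot\|_E$ with $\|\cdot\|_1$ provides: the semi-norm is defined by an infimum over geometric cycles representing the homology class, so it only bounds $\minvolent(M)$ from \emph{below} (up to constants), and its vanishing does not imply the vanishing of $\minvolent(M)$. If your claimed comparison were available it would work in every dimension and would resolve the minimal volume entropy gap problem in all dimensions $\geq 4$, whereas the paper explicitly records that this problem is open in general; the semi-norm equivalence is precisely what powers the \emph{separate} corollary $\Gap(d)\subset\MGap(d)$, not this one. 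The dimension-$4$ input actually used in the paper is Su\'arez-Serrato's Theorem~A: for oriented closed connected smooth $4$-manifolds, $\|M\|=0$ forces the minimal entropy to vanish, and hence, by Besson--Courtois--Gallot, $\minvolent(M)=0$. Without this (or an equivalent collapsing result specific to dimension~$4$) your argument does not close.
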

\begin{proof}
  The second part is clear. For the first part, on the one hand, we use
  that the minimal volume entropy is a linear upper bound for the
  simplicial volume~\cite{bcg}; on the other hand, in dimension~$4$,
  the vanishing of simplicial volume implies the vanishing of the
  minimal entropy~\cite[Theorem~A]{suarezserrato} and whence of the
  minimal volume entropy~\cite{bcg}.
\end{proof}  

The volume entropy semi-norm~$\|\cdot\|_E$ is equivalent to the
$\ell^1$-semi-norm on singular homology~\cite{babenkosabourau_sn}.
Let~$\MGap(d)$ be the class of all groups~$\Gamma$ of type~$\FH d$
such that the set of volume entropy semi-norms~$\|[M]_\R\|_E$ of
oriented closed connected smooth $d$-manifolds~$M$ with fundamental
group isomorphic to~$\Gamma$ has a gap at~$0$.

\begin{cor}[volume entropy semi-norm gaps]
  Let $d\in \N$.
  \begin{enumerate}
  \item We have~$\Gap(d) \subset \MGap(d)$. 
  \item In particular, all the examples of groups in~$\Gap(d)$ listed
    in Section~\ref{subsec:introexas} lie in~$\MGap(d)$.
  \end{enumerate}
\end{cor}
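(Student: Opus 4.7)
The plan is to reduce the statement to the Babenko--Sabourau bi-Lipschitz equivalence of the volume entropy semi-norm~$\|\cdot\|_E$ and the $\ell^1$-semi-norm~$\|\cdot\|_1$ on singular homology. The essential ingredient I need to extract from their result is \emph{uniformity} in the dimension: there should exist constants~$c_1, c_2 \in \R_{>0}$ depending only on~$d$ such that
\[ c_1 \cdot \|M\| \leq \bigl\|[M]_\R\bigr\|_E \leq c_2 \cdot \|M\|
\]
holds for every oriented closed connected smooth $d$-manifold~$M$.

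Granting this, part~(1) becomes a short unpacking of definitions. Fix~$\Gamma \in \Gap(d)$ and choose~$c \in \R_{>0}$ such that~$\|M\| \notin (0,c)$ for every oriented closed connected $d$-manifold~$M$ with~$\pi_1(M) \cong \Gamma$. Restricting attention to smooth such~$M$ and applying the two-sided inequality, the upper bound yields~$\|[M]_\R\|_E = 0$ whenever~$\|M\| = 0$, while the lower bound yields~$\|[M]_\R\|_E \geq c_1 \cdot c$ whenever~$\|M\| \geq c$. This produces a gap of size~$c_1 \cdot c$ at zero in the set of volume entropy semi-norm values realised by such~$M$, so~$\Gamma \in \MGap(d)$. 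Part~(2) is then immediate from part~(1) combined with the examples already collected in Section~\ref{subsec:introexas}.

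The only real obstacle is confirming that the Babenko--Sabourau bi-Lipschitz constants can indeed be chosen uniformly across all manifolds of a fixed dimension, rather than depending on the individual manifold~$M$; a non-uniform equivalence would collapse the transferred gap. This uniformity is, however, precisely the content of the cited theorem, so no further argument beyond invoking it should be needed.
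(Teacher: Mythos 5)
Your proposal is correct and follows essentially the same route as the paper: both reduce part~(1) to the Babenko--Sabourau equivalence of $\|\cdot\|_E$ and $\|\cdot\|_1$ on singular homology in a fixed degree, whose constants depend only on the degree and hence transfer the gap uniformly. Your explicit attention to the uniformity of the constants is exactly the point the paper's one-line proof implicitly relies on.
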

\begin{proof}
  The first part follows from  the fact that $\|\cdot\|_E$
  and~$\|\cdot\|_1$ are equivalent on
  singular homology~\cite[Theorem~1.3]{babenkosabourau_sn},
  whence on fundamental classes of smooth manifolds.
  The second part is clear.
\end{proof}

The smooth Yamabe invariant can be viewed as a curvature integral
sibling of the minimal volume, defined in terms of scalar curvature
instead of sectional/Riemannian curvature.  If $d \in \N_{\geq 5}$ and
$\Gamma$ is of type~$\FH d$, then it is known that the truncated
smooth Yamabe invariant on oriented closed connected smooth spin
$d$-manifolds with fundamental group isomorphic to~$\Gamma$ has a gap
at~$0$; this is implicitly contained in the surgery inheritance
results for this version of the Yamabe
invariant~\cite[Section~1.4]{ammanndahlhumbert}.

%%%%%%%%%
\subsection*{Organisation of this article}

Basic notions are recalled in Section~\ref{sec:prelim}.  In
Section~\ref{sec:ratchar}, we prove Theorem~\ref{thm:ratchar}.
Finally, Section~\ref{sec:inherit} treats the inheritance properties
listed in Section~\ref{subsec:introexas}.

%%%%%%%%%
\subsection*{Acknowledgements}

I would like to thank Bernd Ammann and Francesco Fournier-Facio for
interesting discussions on related topics and the anonymous referee
for carefully reading the manuscript.

%%%%%%%%%%%%%%%%%%%%%%%%%%%%%%%%%%%%%%%%%%%%%%%%%%%%%%%%%%%%%%%%%
\section{Preliminaries}\label{sec:prelim}

We collect basic terminology and properties on simplicial volume
and bounded cohomology~\cite{vbc}. 

%%%%%%%
\subsection{The $\ell^1$-semi-norm and simplicial volume}\label{subsec:sv}

\begin{defi}[$\ell^1$-semi-norm]\label{def:l1}
  Let $X$ be a space or a group and let $d \in \N$. For~$\alpha \in H_d(X;\R)$,
  we set
  \[ \|\alpha\|_1
  := \inf \bigl\{ |c|_1 \bigm| \text{$c \in C_d(X;\R)$, $\partial c=0$, $[c] = \alpha$} \bigr\}
  \in \R_{\geq 0}.
  \]
  Here, $C_*(X;\R)$ denotes the singular chain complex if $X$ is a
  space; if $X$ is a group, $C_*(X;\R)$ can be taken to be the chain
  complex of the simplicial resolution or the singular chain complex
  of a classifying space~$B\Gamma$ (these chain complexes are
  boundedly chain homotopy equivalent with respect to~$|\cdot|_1$).
  Moreover, $|\cdot|_1$ denotes the $\ell^1$-norm on~$C_*(X;\R)$ with
  respect to the basis given by all singular simplices (or all
  simplicial tuples, respectively).
\end{defi}

The $\ell^1$-semi-norm on~$H_*(\args;\R)$ is functorial in the following
sense: If $f \colon X \longrightarrow Y$ is a continuous map (or group
homomorphism, respectively) and $\alpha \in H_d(X;\R)$, then
\[ \bigl\| H_d(f;\R) (\alpha)\bigr\|_1 \leq \|\alpha\|_1.
\]

\begin{defi}[simplicial volume~\cite{munkholm,vbc}]
  The \emph{simplicial volume} of an oriented closed connected
  $d$-manifold~$M$ is defined as
  \[ \|M\| := \bigl\| [M]_\R \bigr\|_1,
  \]
  where $[M]_\R \in H_d(M;\R)$ denotes the $\R$-fundamental class
  of~$M$.
\end{defi}

%%%%%%
\subsection{Bounded cohomology and duality}

% def bounded cohomology

The bounded cohomology of groups or spaces is $H_b^*(\args;\R) :=
H^*\bigl( C_*(\args;\R)^{\#}\bigr), $ where $C_*(\args;\R)^{\#}$
denotes the topological dual with respect to~$|\cdot|_1$
(the latter is introduced in Definition~\ref{def:l1}). 
Forgetting boundedness induces a natural transformation~$\comp^*
\colon H^*_b(\args;\R) \Longrightarrow H^*(\args;\R)$, the
\emph{comparison map}. Classes in the image of the comparison map are
called \emph{bounded}. Evaluating cocycles on cycles induces a
Kronecker product~$\langle\args,\! \args\rangle$, which is compatible
with the comparison map.

\begin{prop}[duality principle~\protect{\cite[p.~16]{vbc}}]\label{prop:dualityprinc}
  Let $d\in \N$, let $X$ be a space/group, and let $\alpha \in
  H_d(X;\R)$.  Then
  \[ \|\alpha\|_1
  = \sup \Bigl \{\frac1{\|\varphi\|_\infty}
  \Bigm| \varphi \in H^d_b(X;\R),\ \langle \varphi,\alpha \rangle = 1
  \Bigr\}.
  \]
\end{prop}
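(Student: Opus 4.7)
The plan is to prove the two inequalities separately, the easy one by direct estimation of the Kronecker pairing, and the other by Hahn--Banach extension.

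First I would establish the elementary inequality
\[
\bigl|\langle \varphi, \alpha\rangle \bigr| \leq \|\varphi\|_\infty \cdot \|\alpha\|_1
\]
for every $\varphi \in H^d_b(X;\R)$ and every $\alpha \in H_d(X;\R)$. For this, fix a bounded cocycle representative~$f$ of~$\varphi$ and a cycle representative~$c$ of~$\alpha$; since $f$ is a cocycle and $c$ is a cycle, the value~$f(c) = \langle\varphi,\alpha\rangle$ depends only on the classes, so $|\langle\varphi,\alpha\rangle| \leq \|f\|_\infty \cdot |c|_1$, and taking infima over all representatives yields the claim. In particular, if $\langle\varphi,\alpha\rangle = 1$, then $1/\|\varphi\|_\infty \leq \|\alpha\|_1$, proving the direction ``$\geq$'' of the duality formula. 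This also disposes of the case $\|\alpha\|_1 = 0$: no bounded class can pair to~$1$ with~$\alpha$, so the supremum on the right-hand side is over the empty set and hence~$0$.

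For the nontrivial direction, assume $\|\alpha\|_1 > 0$ and choose a cycle~$z_0 \in C_d(X;\R)$ with $[z_0] = \alpha$. Inside the normed space $(C_d(X;\R), |\cdot|_1)$ consider the linear subspace
\[
U := B_d(X;\R) + \R \cdot z_0,
\]
where $B_d(X;\R)$ denotes the real boundaries. Since $\alpha \neq 0$, the element~$z_0$ is not a boundary, so every $u \in U$ has a unique decomposition $u = b + t \cdot z_0$, and we may define $\ell \colon U \to \R$ by $\ell(b + t \cdot z_0) := t$. The key estimate is
\[
\|\ell\|_\infty = \sup_{t \neq 0} \frac{|t|}{|b + t \cdot z_0|_1}
= \sup_{b' \in B_d(X;\R)} \frac{1}{|z_0 + b'|_1}
= \frac{1}{\|\alpha\|_1},
\]
where the last equality uses that $\|\alpha\|_1$ equals the infimum of $|z_0 + b'|_1$ over all boundaries~$b'$ (since any cycle representative of~$\alpha$ differs from~$z_0$ by a boundary).

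Next I would apply the Hahn--Banach theorem to extend $\ell$ to a bounded linear functional $f \colon C_d(X;\R) \to \R$ with $\|f\|_\infty = \|\ell\|_\infty \leq 1/\|\alpha\|_1$. Since $f$ extends~$\ell$, it vanishes on $B_d(X;\R) \subset U$; in particular, $f(\partial c) = 0$ for every $c \in C_{d+1}(X;\R)$, so $f$ is a bounded cocycle and defines a class $\varphi := [f] \in H^d_b(X;\R)$ with $\|\varphi\|_\infty \leq 1/\|\alpha\|_1$. Moreover $\langle \varphi, \alpha\rangle = f(z_0) = \ell(z_0) = 1$. Therefore $1/\|\varphi\|_\infty \geq \|\alpha\|_1$, which finishes the proof of the reverse inequality.

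The main obstacle is the Hahn--Banach step: one has to choose the subspace~$U$ and the functional~$\ell$ carefully so that the extended functional is automatically a cocycle (which is why $B_d(X;\R)$ is built into~$U$) and attains the exact norm $1/\|\alpha\|_1$. The only other point to handle with care is the degenerate case $\|\alpha\|_1 = 0$, where the apparent contradiction with division by zero is resolved by observing that the set of admissible~$\varphi$ is empty and the supremum is~$0$ by convention, consistent with~$\|\alpha\|_1 = 0$.
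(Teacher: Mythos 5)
Your proof is correct. The paper does not prove this proposition itself --- it is quoted from Gromov's \emph{Volume and bounded cohomology} --- and your argument (the trivial direction via the Kronecker-pairing estimate, the nontrivial direction via a norm-one functional on $B_d(X;\R)+\R\cdot z_0$ extended by Hahn--Banach, plus the empty-supremum convention when $\|\alpha\|_1=0$) is precisely the standard proof of that cited result.
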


We will also use the following version of the duality principle:

\begin{cor}\label{cor:annihilator} % 2do! findim nec?
  Let $d\in \N$, let $X$ be a space/group with~$\dim_\R H_d(X;\R) <
  \infty$.  Then
  \[ B^d(X;\R)
  = \bigl\{ \varphi \in H^d(X;\R)
  \bigm| \fa{\alpha \in N_d(X;\R)}
  \langle \varphi, \alpha \rangle = 0
  \bigr\}.
  \]
\end{cor}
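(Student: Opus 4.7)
The plan is to reduce the corollary to a finite-dimensional biduality argument, with the duality principle (Proposition~\ref{prop:dualityprinc}) providing the key non-trivial input.

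One inclusion is immediate from the definitions: if $\varphi \in B^d(X;\R)$ admits a bounded representative $\psi \in H^d_b(X;\R)$ with $\comp^d(\psi) = \varphi$, then the Kronecker estimate $|\langle \varphi, \alpha\rangle| = |\langle \psi, \alpha\rangle| \leq \|\psi\|_\infty \cdot \|\alpha\|_1 = 0$ shows that $\varphi$ annihilates every $\alpha \in N_d(X;\R)$.

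For the reverse inclusion, I would first establish the companion statement $N_d(X;\R) = B^d(X;\R)^\perp$, where $\perp$ denotes the annihilator inside $H_d(X;\R)$. The inclusion $N_d(X;\R) \subseteq B^d(X;\R)^\perp$ is exactly the argument above. For the converse, given $\alpha \in H_d(X;\R)$ with $\|\alpha\|_1 > 0$, the duality principle supplies some $\psi \in H^d_b(X;\R)$ with $\langle \psi, \alpha\rangle = 1$; its image $\comp^d(\psi)$ lies in $B^d(X;\R)$ and pairs non-trivially with $\alpha$, so $\alpha \notin B^d(X;\R)^\perp$.

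The final step uses the finite-dimensionality hypothesis. By the universal coefficient theorem over~$\R$, the assumption $\dim_\R H_d(X;\R) < \infty$ identifies $H^d(X;\R)$ with $\Hom_\R(H_d(X;\R), \R)$ via the Kronecker product, so $H_d(X;\R)$ and $H^d(X;\R)$ form a finite-dimensional dual pair. In this setting annihilation is an involution on subspaces, hence $(B^d(X;\R)^\perp)^\perp = B^d(X;\R)$. Combining this with the previous step gives $N_d(X;\R)^\perp = B^d(X;\R)$, which is exactly the claimed equality. No serious obstacle is expected: the main conceptual step is recognising that the duality principle is precisely the input needed to make biduality work, and that finite-dimensionality is the hypothesis that closes the double-annihilator gap.
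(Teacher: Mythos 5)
Your proposal is correct and takes essentially the same route as the paper: both first identify $N_d(X;\R)$ as the annihilator of $B^d(X;\R)$ via the duality principle, and then use the evaluation isomorphism $H^d(X;\R)\cong\Hom_\R(H_d(X;\R),\R)$ together with the finite-dimensional double-annihilator identity to conclude.
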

\begin{proof}
  By the duality principle (Proposition~\ref{prop:dualityprinc}),
  we have
  \begin{align*}
    N_d(X;\R)
  & = \bigl\{ \alpha \in H_d(X;\R)
  \bigm| \fa{\varphi \in H_b^d(X;\R)} \langle \varphi, \alpha\rangle = 0
  \bigr\}
  \\
  & = \bigl\{ \alpha \in H_d(X;\R)
  \bigm| \fa{\varphi \in B^d(X;\R)} \langle \varphi, \alpha\rangle = 0
  \bigr\}.
  \end{align*}
  Because $H_d(X;\R)$ is finite-dimensional and $H^d(X;\R) \cong_\R
  \Hom_\R(H_d(X;\R),\R)$ via the evaluation map, the annihilator
  \[ \bigl\{ \varphi \in H^d(X;\R)
  \bigm| \fa{\alpha \in N_d(X;\R)}
  \langle \varphi, \alpha\rangle =0
  \bigr\}
  \]
  of this null space coincides with~$B^d(X;\R)$.
\end{proof}

%%%%%%%
\subsection{Normed Thom realisation}

Classical Thom realisation and surgery allow us to construct manifolds
from group homology classes with controlled simplicial volume:

\begin{thm}[\protect{\cite[(proof of) Theorem~8.1]{heuerloeh_4mfd}}]
  \label{thm:normedthom}
  Let $d \in \N_{\geq 4}$. Then, there exists a constant~$K_d \in \N_{>0}$
  with the following property: If $\Gamma$ is a finitely presented group
  and $\alpha \in H_d(\Gamma;\R)$ is an integral class, then there exists
  an oriented closed connected $d$-manifold~$M$ with~$\pi_1(M) \cong \Gamma$
  and a~$K\in \{1,\dots, K_d\}$ such that
  \[ \| M\| = K \cdot \|\alpha\|_1.
  \]
\end{thm}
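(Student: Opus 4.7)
The plan is to combine classical oriented Thom realisation with fundamental-group surgery in dimensions~$\geq 4$, and then invoke Gromov's mapping theorem via the duality principle (Proposition~\ref{prop:dualityprinc}) to promote a homology-level equality into the claimed norm equality.

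For the first step, I would fix an integral lift $\alpha_\Z \in H_d(\Gamma;\Z)$ of $\alpha$ and appeal to Thom's theorem on oriented bordism: the cokernel of the natural transformation $\Omega^{SO}_d(\args) \to H_d(\args;\Z)$ has uniformly bounded exponent depending only on $d$, and this bound can be taken as the constant $K_d$. Applied to $B\Gamma$, this yields some $K \in \{1,\dots,K_d\}$, an oriented closed $d$-manifold $N_0$ (possibly disconnected), and a map $g_0 \colon N_0 \to B\Gamma$ satisfying $(g_0)_*[N_0]_\R = K\alpha$. Since $d \geq 3$, passing to the connected sum of components is additive for $\|\cdot\|_1$ and preserves the image class, so I obtain a connected manifold $N$ with $g \colon N \to B\Gamma$ and $g_*[N]_\R = K\alpha$.

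Next, I would modify $N$ by surgery below the middle dimension so that $g$ becomes a $\pi_1$-isomorphism. Since both $\Gamma$ and $\pi_1(N)$ (as fundamental group of a closed manifold) are finitely presented, I first take a further connected sum with copies of $S^1 \times S^{d-1}$, extending the map so that the newly added free generators cover a generating set of $\Gamma$; this enforces surjectivity on $\pi_1$. Next, I would perform $2$-surgeries on embedded circles representing a finite set of normal generators of the kernel of $\pi_1(g)$: the hypothesis $d \geq 4$ provides sufficient embedding and framing room, and the nullhomotopy in $B\Gamma$ of each killed loop (which lies in $\ker \pi_1(g)$) allows the map to extend across each surgery trace. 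Collecting all traces yields a cobordism over $B\Gamma$ from $N$ to an oriented closed connected $d$-manifold $M$ with $\pi_1(M) \cong \Gamma$ and with induced classifying map $f \colon M \to B\Gamma$ satisfying $f_*[M]_\R = g_*[N]_\R = K\alpha$.

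The final step is the norm equality. Because $f$ is a $\pi_1$-isomorphism, Gromov's mapping theorem provides an isometric isomorphism $H_b^*(\Gamma;\R) \to H_b^*(M;\R)$. Combined with the duality principle (Proposition~\ref{prop:dualityprinc}) applied on both sides, this gives
\[
  \|M\| = \bigl\|[M]_\R\bigr\|_1 = \bigl\|f_*[M]_\R\bigr\|_1 = \|K\alpha\|_1 = K \cdot \|\alpha\|_1.
\]
The main obstacle is the surgery step: one must carefully track the image of the fundamental class in $H_d(B\Gamma;\R)$ across each handle attachment, and exploit the finite presentability of $\Gamma$ to conclude after finitely many $2$-surgeries that the kernel has been trivialised. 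Once $M$ comes equipped with a classifying-map presentation, the concluding norm identity is a clean consequence of standard bounded-cohomology machinery.
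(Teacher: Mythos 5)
The paper does not prove this statement itself but imports it from the cited reference, and your reconstruction follows exactly the argument given there: Thom's bounded-exponent realisation of integral classes by oriented bordism, connected sum and low-index surgery (valid in codimension~$\geq 3$, i.e.\ $d \geq 4$, using finite presentability to normally generate the kernel by finitely many loops) to arrange $\pi_1(M) \cong \Gamma$ with $f_*[M]_\R = K\alpha$, and then the mapping theorem plus the duality principle to convert this into the norm identity. Your argument is correct and essentially identical to the source's proof.
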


%%%%%%%%%%%%%%%%%%%%%%%%%%%%%%%%%%%%%%%%%%%%%%%%%%%%%%%%%%%%%%%%%
\section{Gaps via rationality}\label{sec:ratchar}

In this section, we prove Theorem~\ref{thm:ratchar}.
More precisely, we show:
\begin{itemize}
\item the equivalence \ref{i:gap}. $\Longleftrightarrow$
  \ref{i:groupgap}.  in Section~\ref{subsec:intlattice} via the
  mapping theorem and normed Thom realisation;
\item the equivalence \ref{i:groupgap}. $\Longleftrightarrow$
  \ref{i:ratzero}.  in Section~\ref{subsec:ratzero} through basic
  properties of integer lattices in vector spaces;
\item the equivalence \ref{i:ratzero}. $\Longleftrightarrow$
  \ref{i:ratbounded}.  in Section~\ref{subsec:ratbounded} by the
  duality principle;
\end{itemize}

%%%%%%%%%%%
\subsection{The integral lattice}\label{subsec:intlattice}

Let $X$ be a space or a group.
A class in~$H_d(X;\R)$ is called \emph{integral} if it is in the
image of the change of coefficients map~$H_d(X;\Z)
\longrightarrow H_d(X;\R)$. We write
\[ Z_d(X)
:= \bigl\{ \alpha \in H_d(X;\R)
\bigm| \text{$\alpha$ is integral}
\bigr\}
\]
for the $\Z$-submodule of~$H_d(X;\R)$ of integral classes.
Normed Thom realisation shows that $\SVG d \Gamma$ is roughly
the same as~$\{ \|\alpha\|_1 \mid \alpha \in Z_d(\Gamma)\}$:

\begin{proof}[Proof of Theorem~\ref{thm:ratchar},
    \ref{i:groupgap}. $\Longrightarrow$ \ref{i:gap}]
  Let $M \in \Mfd_d$ satisfying~$\pi_1(M) \cong \Gamma$ and let $f
  \colon M \longrightarrow B\Gamma$ be the classifying map. As $f$
  induces an isomorphism on the level of fundamental groups, we obtain
  from the mapping theorem~\cite[Section~3.1]{vbc} and the duality
  principle (Proposition~\ref{prop:dualityprinc}) that
  \[ 
    \| M \|
  = \bigl\| [M]_\R \bigr\|_1
  = \bigl\| H_d(f;\R)([M]_\R)\bigr\|_1.
  \]
  Moreover, $[M]_\R \in H_d(M;\R)$ is an integral class 
  and so $H_d(f;\R)([M]_\R) \in Z_d(\Gamma)$.

  Hence, if $\|\cdot\|_1$ has a gap at zero on~$Z_d(\Gamma)$,
  then also~$\SVG d \Gamma$ has a gap at zero.
\end{proof}

\begin{proof}[Proof of Theorem~\ref{thm:ratchar},
    \ref{i:gap}. $\Longrightarrow$ \ref{i:groupgap}]
  Let $\SVG d \Gamma$ have a gap~$c$ at zero and let $K_d \in \N_{>0}$
  be a constant for normed Thom realisation in dimension~$d$
  (Theorem~\ref{thm:normedthom}). Then $c/K_d$ is a gap
  for~$\|\cdot\|_1$ on~$Z_d(\Gamma)$:
  
  Let $\alpha \in Z_d(\Gamma)$ with~$\|\alpha\|_1 \neq 0$. Normed Thom
  realisation shows that there exists an~$M \in \Mfd_d$ with~$\pi_1(M)
  \cong \Gamma$ and $\|M\| = K \cdot \|\alpha\|_1$ with~$K \in
  \{1,\dots,K_d\}$. In particular, we obtain~$\|\alpha\|_1 \geq
  \|M\|/K \geq c/K_d$, as claimed.
\end{proof}

\begin{rem}[lattices]\label{rem:intlattice}
  Let $V$ be a finite-dimensional $\R$-vector space. Then $V$ carries
  a canonical topology (induced by any Euclidean inner product
  on~$V$). A \emph{lattice} in~$V$ is a $\Z$-submodule that is
  discrete with respect to the canonical topology. We recall two basic
  facts on lattices:
  \begin{itemize}
  \item
    If $\|\cdot\|$ is a norm on~$V$ and $L \subset V$ is a
    lattice, then $\{ \|x\| \mid x \in L \setminus \{0\} \}$
    has a gap at zero.

    (The corresponding statement for \emph{semi-}norms is false, in
    general: The semi-norm $x \mapsto |x_1 - \sqrt 2 \cdot x_2|$
    on~$\R^2$ does not have a gap on the standard lattice~$\Z^2
    \subset \R^2$. Even worse, this semi-norm is non-degenerate
    on~$\Z^2$.)
  \item
    If $L \subset V$ is a cocompact lattice, then $V$ has
    an $\R$-basis consisting of elements of~$L$.
  \end{itemize}

  Our main example is: Let $d \in \N$ and let $X$ be a space/group
  satisfying $\dim_\R H_d(X;\R) < \infty$.  Then, by the universal
  coefficient theorem, $Z_d(X)$ is a lattice in~$H_d(X;\R)$.
\end{rem}

%%%%%%%%%%%%
\subsection{Rationality of the zero-norm subspace}\label{subsec:ratzero}

In the following, we consider the quotient space~$Q_d(\Gamma;\R)
:= H_d(\Gamma;\R)/N_d(\Gamma;\R)$. By construction,  
the quotient semi-norm of~$\|\cdot\|_1$ on~$Q_d(\Gamma;\R)$
is a norm and the canonical projection~$\pi \colon H_d(\Gamma;\R)
\longrightarrow Q_d(\Gamma;\R)$ is isometric. We 
denote the quotient norm also by~$\|\cdot\|_1$. 

\begin{proof}[Proof of Theorem~\ref{thm:ratchar},
    \ref{i:ratzero}. $\Longrightarrow$ \ref{i:groupgap}]
  Let $N_d(\Gamma;\R)$ be rational in~$H_d(\Gamma;\R)$.  Because
  $N_d(\Gamma;\R)$ is rational and $Z_d(\Gamma)$ is a lattice
  in~$H_d(\Gamma;\R)$ (Remark~\ref{rem:intlattice}), the
  image~$\pi(Z_d(\Gamma))$ is a lattice in the finite-dimensional
  $\R$-vector
  space~$Q_d(\Gamma;\R)$~\cite[Corollary~10.3]{barvinok}. In
  particular, the norm~$\|\cdot\|_1$ has a gap at~$0$
  on~$\pi(Z_d(\Gamma))$ (Remark~\ref{rem:intlattice}).  Therefore,
  also
  \[ \bigl\{ \|\alpha\|_1 \bigm| \alpha \in Z_d(\Gamma) \bigr\}
  =  \bigl\{ \|\pi(\alpha)\|_1 \bigm| \alpha \in Z_d(\Gamma) \bigr\}
  = \bigl\{ \|\beta\|_1 \bigm| \beta \in \pi(Z_d(\Gamma)\bigr\}
  \]
  has a gap at zero.
\end{proof}

\begin{proof}[Proof of Theorem~\ref{thm:ratchar},
    \ref{i:groupgap}. $\Longrightarrow$ \ref{i:ratzero}]
  Let $\|\cdot\|_1$ have a gap~$c$ at zero on~$Z_d(\Gamma)$. 
  We show that $N_d(\Gamma;\R)$ is rational in~$H_d(\Gamma;\R)$:

  Because $Z_d(\Gamma)$ is a lattice in~$H_d(\Gamma;\R)$
  (Remark~\ref{rem:intlattice}), there exists a
  tuple~$(v_1,\dots,v_n)$ of elements of~$Z_d(\Gamma)$ that is an
  $\R$-basis for~$H_d(\Gamma;\R)$ (Remark~\ref{rem:intlattice}).  Let
  $\alpha \in N_d(\Gamma;\R)$. We write
  \[ \alpha = \sum_{j=1}^n \lambda_j \cdot v_j
  \]
  with~$\lambda_1,\dots, \lambda_n \in \R$. Given $N \in \N_{>0}$,
  simultaneous Dirichlet
  approximation~\cite[Theorem~II.1.A]{schmidt_diophantine} shows that
  there exist~$p_{N,1},\dots, p_{N,n} \in \Z$ and $q_N \in
  \{1,\dots,N\}$ with
  \[ \fa{j \in \{1,\dots,n\}}
   \Bigl|\lambda_j - \frac{p_{N,j}}{q_N} \Bigr| < \frac 1{q_N \cdot N^{1/n}}. 
  \]
  Then the class
  $\alpha_N
     := \sum_{j=1}^n p_{N,j} \cdot v_j
  $ 
  lies in~$Z_d(\Gamma)$ and
  \[ 
  \| q_N \cdot \alpha - \alpha_N \|_1
  \leq \sum_{j=1}^n |q_N \cdot \lambda_j - p_{N,j}| \cdot \|v_j\|_1
  \leq \sum_{j=1}^n \frac1{N^{1/n}} \cdot \|v_j\|_1.
  \]
  Because $q_N \cdot \alpha \in N_d(\Gamma;\R)$, we
  obtain
  $\|\alpha_N \|_1 = \|q_N \cdot \alpha - \alpha_N\|_1$
  and so the previous estimate and the gap~$c$ show that
  $\|\alpha_N\|_1 = 0$ for all large enough~$N$.
  Hence, $\alpha_N \in N_d(\Gamma;\R) \cap Z_d(\Gamma)$
  and $1/q_N \cdot \alpha_N \in N_d(\Gamma;\R) \cap H_d(\Gamma;\Q)$.

  We now consider the standard topology on the finite-dimensional
  $\R$-vector space~$H_d(\Gamma;\R)$. Then the choice of the
  approximating coefficients shows that $\lim_{N \to \infty} 1/q_N
  \cdot \alpha_N =\alpha$.

  In conclusion, $\alpha$ lies in the closure of $N_d(\Gamma;\R) \cap
  H_d(\Gamma;\Q)$ with respect to the standard topology. As
  $\R$-subspaces of finite-dimensional $\R$-vector spaces are closed
  in the standard topology, $\alpha$ lies in the $\R$-subspace
  generated by~$N_d(\Gamma;\R) \cap H_d(\Gamma;\Q)$.  This shows that
  $N_d(\Gamma;\R)$ indeed is rational.
\end{proof}

%%%%%%%%%%%
\subsection{Rationality of the bounded subspace}\label{subsec:ratbounded}

\begin{proof}[Proof of Theorem~\ref{thm:ratchar},
    \ref{i:ratzero}. $\Longleftrightarrow$ \ref{i:ratbounded}]
  This is a consequence of Corollary~\ref{cor:annihilator}: By linear
  algebra over~$\Q$, an $\R$-subspace of~$H_d(\Gamma;\R)$ is rational
  if and only if its annihliator is rational in the dual $\R$-vector
  space.  Thus, $N_d(\Gamma;\R)$ is rational in~$H_d(\Gamma;\R)$ if
  and only if $B^d(\Gamma;\R)$ is rational in the
  dual~$H^d(\Gamma;\R)$ of~$H_d(\Gamma;\R)$.
  % 2do! improve!
\end{proof}

%%%%%%%%%%%%%%%%%%%%%%%%%%%%%%%%%%%%%%%%%%%%%%%%%%%%%%%%%%%%%%%%%
\section{Inheritance properties}\label{sec:inherit}

We prove the inheritance properties from Section~\ref{subsec:introexas}. 

\begin{lemma}[free products]
  Let $d \in \N_{\geq 4}$. Then $\Gap(d)$ is closed under
  taking (finite) free products.
\end{lemma}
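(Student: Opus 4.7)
The plan is to reduce to the binary case and to apply the characterisation in Theorem~\ref{thm:ratchar} after establishing a compatible direct-sum decomposition. By iterating, it suffices to show: if $\Gamma, \Lambda \in \Gap(d)$, then $\Gamma * \Lambda \in \Gap(d)$.

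First I would verify that $\Gamma * \Lambda$ is of type~$\FH d$. Finite presentability is inherited from the factors, and the inclusions $\Gamma, \Lambda \hookrightarrow \Gamma * \Lambda$ induce a $\Z$-defined isomorphism
\[
  H_d(\Gamma;\R) \oplus H_d(\Lambda;\R)
  \xrightarrow{\cong}
  H_d(\Gamma * \Lambda;\R)
\]
for $d \geq 1$ (since $B(\Gamma * \Lambda)$ is, up to homotopy, the wedge $B\Gamma \vee B\Lambda$); in particular, finite-dimensionality is preserved.

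Next I would show that under this identification
\[
  N_d(\Gamma * \Lambda;\R)
  \;=\;
  N_d(\Gamma;\R) \oplus N_d(\Lambda;\R).
\]
The inclusion ``$\supseteq$'' follows directly from the functoriality of~$\|\cdot\|_1$ applied to the inclusions of the free factors. For ``$\subseteq$'', the key input is the classical splitting
\[
  H^d_b(\Gamma * \Lambda;\R)
  \;\cong\;
  H^d_b(\Gamma;\R) \oplus H^d_b(\Lambda;\R)
  \qquad (d \geq 2)
\]
induced by the factor inclusions, which comes from amalgamation over the trivial (hence amenable) subgroup and is compatible with the comparison maps. Combining this with the duality principle (Proposition~\ref{prop:dualityprinc}) shows that a class~$\alpha = \alpha_\Gamma + \alpha_\Lambda$ satisfies $\|\alpha\|_1 = 0$ iff both $\|\alpha_\Gamma\|_1 = 0$ and $\|\alpha_\Lambda\|_1 = 0$, giving the claim.

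Finally I would conclude by rationality: by hypothesis and Theorem~\ref{thm:ratchar}, both $N_d(\Gamma;\R)$ and $N_d(\Lambda;\R)$ are rational in their ambient spaces. Since the direct-sum isomorphism above is defined over~$\Q$, the subspace $N_d(\Gamma * \Lambda;\R)$ is rational in $H_d(\Gamma * \Lambda;\R)$; a second application of Theorem~\ref{thm:ratchar} yields $\Gamma * \Lambda \in \Gap(d)$. The only delicate point is the factorwise detection of zero-norm classes; this is the standard consequence of the amenable edge group in the Bass--Serre tree of a free product, so no new work is needed beyond invoking the splitting of bounded cohomology.
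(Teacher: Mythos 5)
Your proof is correct and follows the same overall skeleton as the paper's (reduce to the binary case, check type~$\FH d$, identify $N_d(\Gamma*\Lambda;\R)$ with $N_d(\Gamma;\R)\oplus N_d(\Lambda;\R)$ under the Mayer--Vietoris isomorphism, and conclude by rationality via Theorem~\ref{thm:ratchar}). The one place where you diverge is the inclusion $N_d(\Gamma*\Lambda;\R)\subseteq N_d(\Gamma;\R)\oplus N_d(\Lambda;\R)$: you invoke the splitting $H^d_b(\Gamma*\Lambda;\R)\cong H^d_b(\Gamma;\R)\oplus H^d_b(\Lambda;\R)$ for $d\geq 2$ together with the duality principle. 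That works, but the paper gets this inclusion much more cheaply: a free product retracts onto each of its factors, and functoriality of $\|\cdot\|_1$ under the projections $p\colon\Gamma*\Lambda\to\Gamma$ and $q\colon\Gamma*\Lambda\to\Lambda$ immediately gives $\|H_d(p;\R)(\alpha)\|_1\leq\|\alpha\|_1=0$ and likewise for $q$, so no bounded cohomology is needed for this lemma at all. Your heavier input is not wasted, though --- it is exactly the mechanism the paper uses for the generalisation to graphs of groups with amenable edge groups (Lemma~\ref{lem:graphs}), where no retractions onto the vertex groups are available; so your argument proves the free-product case as a special instance of the more general principle, at the cost of citing the nontrivial splitting of bounded cohomology of amalgams over amenable subgroups.
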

\begin{proof}
  Let $\Gamma, \Lambda \in \Gap(d)$. We show that $\Gamma * \Lambda
  \in \Gap(d)$:

  With $\Gamma$ and $\Lambda$ also~$\Gamma * \Lambda$ is of type~$\FH
  d$ (finitely presented groups are closed under free products and the
  homology is finite-dimensional by the Mayer--Vietoris sequence).  By
  Theorem~\ref{thm:ratchar}, we know that $N_d(\Gamma;\R)$ and
  $N_d(\Lambda;\R)$ are rational and it suffices to show that
  $N_d(\Gamma * \Lambda;\R)$ is rational:

  The inclusions/projections~$i,j$ and $p,q$, respectively, of the
  summands of the free product~$\Gamma * \Lambda$ induce the
  Mayer--Vietoris $\R$-isomorphism~$\varphi \colon H_d(\Gamma *
  \Lambda ;\R) \longrightarrow H_d(\Gamma;\R) \oplus
  H_d(\Lambda;\R)$. Under this isomorphism, $N_d(\Gamma * \Lambda;\R)$
  corresponds to~$N_d(\Gamma;\R) \oplus N_d(\Lambda;\R)$: If $\alpha
  \in N_d(\Gamma*\Lambda;\R)$, then
  \[ \bigl\| H_d(p;\R) (\alpha) \bigr\|_1 \leq \|\alpha\|_1 = 0
  \qand \bigl\| H_d(q;\R) (\alpha) \bigr\|_1 \leq \|\alpha\|_1 = 0
  \]
  and so $\varphi(\alpha) \in N_d(\Gamma;\R) \oplus N_d(\Lambda;\R)$. 
  Conversely, if $(\alpha,\beta) \in N_d(\Gamma;\R) \oplus N_d(\Lambda;\R)$,
  then
  \[ \bigl\| \varphi^{-1}(\alpha,\beta)
  \bigr\|_1
  = \bigl\| H_d(i;\R)(\alpha) + H_d(j;\R)(\beta) 
  \bigr\|_1
  \leq \|\alpha\|_1 + \|\beta\|_1
  \leq 0 
  \]
  and thus~$\varphi^{-1}(\alpha,\beta) \in N_d(\Gamma*\Lambda;\R)$. 

  Because $\varphi$ maps rational subspaces to rational subspaces,
  also $N_d(\Gamma*\Lambda;\R)$ is rational.
\end{proof}

\begin{lemma}[graphs of groups]\label{lem:graphs}
  Let $d \in \N_{\geq 4}$, let $G$ be a graph of groups on a finite
  graph~$(V,E)$, whose vertex groups~$(G_v)_{v\in V}$ lie in~$\Gap(d)$
  and whose edge groups~$(G_e)_{e\in E}$ are amenable. Let
  $\Gamma$ be the fundamental group of~$G$.  If $\Gamma$ is of
  type~$\FH d$, then also~$\Gamma \in \Gap(d)$.
\end{lemma}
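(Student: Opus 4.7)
The plan is to verify condition~\ref{i:ratbounded} of Theorem~\ref{thm:ratchar} for~$\Gamma$, namely that the bounded subspace $B^d(\Gamma;\R)$ is rational in~$H^d(\Gamma;\R)$; the conclusion $\Gamma \in \Gap(d)$ then follows from the same theorem. The crucial external input is the bounded-cohomology analogue of Mayer--Vietoris for graphs of groups: since each edge group $G_e$ is amenable, we have $H^n_b(G_e;\R) = 0$ for $n \geq 1$, and Bouarich's sequence, iterated over the finite graph $(V,E)$, yields an isomorphism
\[ H^d_b(\Gamma;\R) \cong \bigoplus_{v \in V} H^d_b(G_v;\R) \]
for $d \geq 2$, induced by restriction to the vertex groups.

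Combining this isomorphism with the ordinary-cohomology Mayer--Vietoris sequence
\[ \bigoplus_e H^{d-1}(G_e;\R) \xrightarrow{\partial} H^d(\Gamma;\R) \xrightarrow{r} \bigoplus_v H^d(G_v;\R) \xrightarrow{f^*} \bigoplus_e H^d(G_e;\R) \]
and naturality of the comparison map, I would first show that $r$ maps $B^d(\Gamma;\R)$ onto $\bigoplus_v B^d(G_v;\R)$. Each $B^d(G_v;\R)$ is rational in $H^d(G_v;\R)$ by $G_v \in \Gap(d)$ and Theorem~\ref{thm:ratchar}, so the direct sum is rational. The short exact sequence
\[ 0 \to B^d(\Gamma;\R) \cap \ker(r) \to B^d(\Gamma;\R) \to \bigoplus_v B^d(G_v;\R) \to 0 \]
then reduces rationality of $B^d(\Gamma;\R)$ to rationality of the kernel $B^d(\Gamma;\R) \cap \ker(r) = B^d(\Gamma;\R) \cap \im(\partial)$ inside the $\Q$-defined subspace $\ker(r)$.

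The hard part will be this final identification: pinning down rationally which edge-supported classes in $\im(\partial)$ admit bounded representatives. One line of attack is to produce bounded cocycle representatives for $\partial$-classes directly, using amenability of the $G_e$ at the cochain level. Alternatively, one can work on the homological side via condition~\ref{i:ratzero}: the duality principle together with the bounded-cohomology splitting gives $p^{-1}(N_d(\Gamma;\R)) = \bigoplus_v N_d(G_v;\R)$ (using that evaluating a vertex-wise bounded cocycle on $p(\beta)$ only pairs $\varphi_v$ with $\beta_v$), yielding
\[ 0 \to p\bigl(\bigoplus_v N_d(G_v;\R)\bigr) \to N_d(\Gamma;\R) \to \delta(N_d(\Gamma;\R)) \to 0 \]
with rational left-hand term; the remaining task is then rationality of the edge contribution $\delta(N_d(\Gamma;\R))$ inside the rational finite-dimensional subspace $\im(\delta) \subseteq \bigoplus_e H_{d-1}(G_e;\R)$. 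Either approach, once completed, gives Theorem~\ref{thm:ratchar} and hence $\Gamma \in \Gap(d)$.
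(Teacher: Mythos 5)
Your strategy coincides with the paper's up to the decisive point: use the vanishing of $H^{*}_b$ of the amenable edge groups to identify $H^d_b(\Gamma;\R)$ with $\bigoplus_{v\in V}H^d_b(G_v;\R)$ via restriction, feed this into the commutative square with the comparison maps to get $r\bigl(B^d(\Gamma;\R)\bigr)=\bigoplus_{v\in V}B^d(G_v;\R)$, and observe that the right-hand side is rational by Theorem~\ref{thm:ratchar} applied to the vertex groups. All of this is fine and is exactly what the paper does. But your proposal then stops: you explicitly defer the remaining step (``once completed''), so as written this is not a proof.

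Moreover, the reduction you offer for that remaining step is not actually a reduction. Rationality of $B^d(\Gamma;\R)\cap\ker(r)$ together with rationality of $r\bigl(B^d(\Gamma;\R)\bigr)$ does \emph{not} imply rationality of $B^d(\Gamma;\R)$: for the first-coordinate projection $r\colon\R^2\to\R$ and the line $U$ spanned by $(1,\sqrt2)$, both $U\cap\ker(r)=0$ and $r(U)=\R$ are rational while $U$ is not. So your short exact sequence only helps if you can additionally produce \emph{rational} lifts in $B^d(\Gamma;\R)$ of a rational basis of $\bigoplus_{v}B^d(G_v;\R)$, which is essentially the original problem. The clean way to close the argument --- and the reading under which the paper's final step (``$F$ is rational, its kernel is rational, therefore $B^d(\Gamma;\R)$ is rational'') is a valid inference --- is to establish $\ker(r)\subseteq B^d(\Gamma;\R)$, so that $B^d(\Gamma;\R)=r^{-1}\bigl(\bigoplus_{v}B^d(G_v;\R)\bigr)$ is the preimage of a rational subspace under a rational map; by Corollary~\ref{cor:annihilator} this containment is equivalent to $N_d(\Gamma;\R)\subseteq\im\bigl(\bigoplus_{v}H_d(G_v;\R)\to H_d(\Gamma;\R)\bigr)$. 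Neither of your two sketched lines of attack proves this (your asserted identification of $N_d(\Gamma;\R)$ with the vertex contributions is exactly this unproved claim), and it is not a formality: for an amalgam of two copies of $\Z^3\rtimes_\phi\Z$ with $\det\phi=-1$ over $\Z^3$ one has $B^4(\Gamma;\R)=0$ but $\ker(r)=H^4(\Gamma;\R)\cong\R$, so the containment can genuinely fail and the edge contribution must be analysed, not just bounded. In short, you have correctly located the crux of the lemma, but you have not proved it.
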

\begin{proof}
  By Theorem~\ref{thm:ratchar}, $N_d(G_v;\R)$ is rational for
  all~$v\in V$ and it suffices to show that $N_d(\Gamma;\R)$ is
  rational.

  We consider the following commutative diagram:
  \[ \xymatrix{%
    H^d_b(\Gamma;\R) \ar[r]^-{F_b}
    \ar[d]_{\comp^d_\Gamma}
    & \bigoplus_{v\in V} H^d_b(G_v;\R)
    \ar[d]^{\bigoplus_{v \in V} \comp^d_{G_v}}
    \\
    H^d(\Gamma;\R) \ar[r]_-{F}
    & \bigoplus_{v \in V} H^d(G_v;\R)
    }
  \]
  Here, $F_b$ and $F$ denote the maps induced by the inclusions of the
  vertex groups on bounded cohomology and cohomology, respectively.
  The upper horizontal arrow~$F_b$ is surjective~\cite{bbfipp}.
  Hence, the diagram implies that
  \[ F \bigl(B^d(\Gamma;\R)\bigr)
     = \bigoplus_{v \in V} B^d(G_v;\R).
  \]
  The hypothesis that~$G_v \in \Gap(d)$ for all~$v\in V$ shows
  that the right-hand side is rational.
  Moreover, the map~$F$ is rational because it is induced by group
  homomorphisms; in particular, the kernel of~$F$ is rational.
  Therefore, also $B^d(\Gamma;\R)$ is rational.
\end{proof}

The statement of Lemma~\ref{lem:graphs} can be generalised to
uniformly boundedly acyclic edge groups by using the corresponding
result on bounded cohomology of such graphs of
groups~\cite[Theorem~8.11]{liloehmoraschini}.

In the situation of Lemma~\ref{lem:graphs}, we have the following
sufficient condition for the group~$\Gamma$ to be of type~$\FH d$:
By hypothesis, all vertex groups are of type~$\FH d$. If all edge
groups are of type~$\FH {d+1}$, then the Mayer--Vietoris sequence
in the proof of Lemma~\ref{lem:graphs} shows that $\Gamma$ is
of type~$\FH d$.

\begin{lemma}[products]
  Let $d \in \N_{\geq 4}$ and let $\Gamma \in \bigcap_{k \in \{2,\dots,d\}}\Gap(k)$
  and $\Lambda \in \bigcap_{k \in \{2,\dots,d\}} \Gap(k)$.
  Then $\Gamma \times \Lambda \in \Gap(d)$.
\end{lemma}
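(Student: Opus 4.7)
The plan is to verify condition~\ref{i:ratzero} of Theorem~\ref{thm:ratchar} for~$\Gamma\times\Lambda$, by showing that $N_d(\Gamma\times\Lambda;\R)$ is a rational summand under the K\"unneth decomposition. Before invoking Theorem~\ref{thm:ratchar}, I would first check that $\Gamma\times\Lambda$ is of type~$\FH d$: finite presentation is closed under products, and K\"unneth together with finite-dimensionality of each~$H_k(\Gamma;\R), H_k(\Lambda;\R)$ for~$k\in\{0,\dots,d\}$ (from the $\FH k$-hypotheses when $k\geq 2$, and from finite generation of the abelianisation when~$k\in\{0,1\}$) gives~$\dim_\R H_d(\Gamma\times\Lambda;\R)<\infty$.

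Next I would set up rational complements in every homological degree. For each~$k\in\{2,\dots,d\}$, Theorem~\ref{thm:ratchar} tells me that $N_k(\Gamma;\R)$ and $N_k(\Lambda;\R)$ are rational; for $k=0$ this is trivial since $N_0(\args;\R)=0$, and for $k=1$ the vanishing $H_b^1(\args;\R)=0$ together with the duality principle (Proposition~\ref{prop:dualityprinc}) forces $N_1(\args;\R)=H_1(\args;\R)$. I then pick rational complements $V_k(\Gamma), V_k(\Lambda)$ in each degree, so that $V_0=\R$ and $V_1=0$. The K\"unneth isomorphism turns into a rational direct-sum decomposition
\[
H_d(\Gamma\times\Lambda;\R) = W \oplus U,
\]
where $U := \bigoplus_{k+l=d} V_k(\Gamma)\otimes V_l(\Lambda)$ and $W$ is the sum of the remaining K\"unneth summands, i.e., those having at least one $N$-factor.

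The remaining work then splits into two steps. Step~(a), $W\subseteq N_d(\Gamma\times\Lambda;\R)$, follows from the standard cross-product estimate $\|\alpha\times\beta\|_1 \leq \binom{d}{k}\|\alpha\|_1\|\beta\|_1$: every generating pure tensor of~$W$ has a factor of vanishing $\ell^1$-semi-norm. Step~(b), $U\cap N_d(\Gamma\times\Lambda;\R)=0$, is the heart of the argument. Given $\alpha=\sum_{k+l=d}\alpha_{k,l}\in U\cap N_d(\Gamma\times\Lambda;\R)$ with $\alpha_{k,l}\in V_k(\Gamma)\otimes V_l(\Lambda)$, I would fix a bidegree $(k_0,l_0)$ with $k_0+l_0=d$ and test $\alpha$ against bounded cross products $\varphi\times\psi$ for arbitrary $\varphi\in B^{k_0}(\Gamma;\R)$ and $\psi\in B^{l_0}(\Lambda;\R)$. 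The cross product of bounded cochains is bounded, so $\varphi\times\psi\in B^d(\Gamma\times\Lambda;\R)$, and by Corollary~\ref{cor:annihilator} annihilates~$N_d(\Gamma\times\Lambda;\R)$; since the Kronecker pairing vanishes across distinct K\"unneth bidegrees, $\langle\varphi\times\psi,\alpha\rangle$ collapses to $\langle\varphi\times\psi,\alpha_{k_0,l_0}\rangle$, which must therefore vanish. Applying Corollary~\ref{cor:annihilator} separately to $\Gamma$ and $\Lambda$ identifies $B^{k_0}(\Gamma;\R)$ with the full linear dual of~$V_{k_0}(\Gamma)$ (and similarly for~$\Lambda$); non-degeneracy of the finite-dimensional tensor pairing then forces $\alpha_{k_0,l_0}=0$.

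Combining (a) and (b) yields $N_d(\Gamma\times\Lambda;\R)=W$, which is rational, and Theorem~\ref{thm:ratchar} finishes the job. The main obstacle is step~(b): the cross-product norm estimate only suffices to place $W$ inside~$N_d(\Gamma\times\Lambda;\R)$, and ruling out further zero-norm classes in~$U$ genuinely needs the bounded cohomology cross product together with the annihilator description of each~$B^{k_0}(\Gamma;\R)$, $B^{l_0}(\Lambda;\R)$ in every intermediate degree---which is exactly why the hypothesis is $\Gap(k)$ for all~$k\in\{2,\dots,d\}$ rather than just~$\Gap(d)$.
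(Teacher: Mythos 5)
Your proposal is correct and follows essentially the same route as the paper: the same K\"unneth decomposition (your $W$ is exactly the paper's subspace~$N$), the cross-product estimate for the inclusion $W \subseteq N_d(\Gamma\times\Lambda;\R)$, and the reverse inclusion via cross products of bounded classes together with the annihilator description of Corollary~\ref{cor:annihilator} in each intermediate degree. The only cosmetic difference is that the paper phrases the duality step through explicit dual bases $(\varphi_i),(\alpha_i)$ with $\langle\varphi_i,\alpha_j\rangle=\delta_{ij}$ and the estimate $|\langle\varphi\times\psi,\alpha\rangle|\leq\|\varphi\|_\infty\cdot\|\psi\|_\infty\cdot\|\alpha\|_1$, whereas you phrase it through rational complements and non-degeneracy of the tensor pairing.
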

\begin{proof}
  As $\Gamma$ and $\Lambda$ are of type~$\FH d$, also~$\Gamma \times
  \Lambda$ is of type~$\FH d$ (finitely presented groups are closed
  under finite products; and the cohomological K\"unneth theorem).

  By Theorem~\ref{thm:ratchar}, we know that $N_k(\Gamma;\R)$ and
  $N_k(\Lambda;\R)$ are rational for all~$k \in \{2,\dots,d\}$
  and it suffices to show that $N_d(\Gamma\times \Lambda;\R)$
  is rational:

  More precisely, we show that, under the K\"unneth isomorphism,
  $N_d(\Gamma\times\Lambda;\R)$ corresponds to
  \[
  N := 
  \sum_{j=0}^d
  \bigl(N_j(\Gamma;\R) \otimes_\R H_{d-j}(\Lambda;\R)
  + H_j(\Gamma;\R) \otimes_\R N_{d-j}(\Lambda;\R) \bigr).
  \]
  Because the K\"unneth isomorphism preserves rational subspaces
  and because $N_0(\args;\R) = 0$ and $N_1(\args;\R) = H_1(\args;\R)$
  are always rational, this would finish the proof.

  The standard estimate for the homological cross-product
  (via the shuffle description of the Eilenberg--Zilber map) 
  shows that~$N \subset N_d(\Gamma \times \Lambda;\R)$.  In order to
  prove the converse inclusion~$N_d(\Gamma\times\Lambda;\R) \subset
  N$, we proceed as follows:

  We consider the bilinear form
  \[ \langle \args,\!\args\rangle
  \colon B^{\leq d}(\Gamma;\R) \times H_{\leq d}(\Gamma;\R)
  \longrightarrow \R.
  \]
  The description of the bounded part from
  Corollary~\ref{cor:annihilator} and elementary finite-dimensional
  linear algebra show that there exist families~$(\varphi_i)_{i\in I_1}$
  in~$B^{\leq d}(\Gamma;\R)$ and $(\alpha_i)_{i \in I_1 \sqcup I_0}$
  in~$H_{\leq d}(\Gamma;\R)$ with the following properties:
  \begin{itemize}
  \item The family~$(\alpha_i)_{i \in I_0}$ is an $\R$-basis
    of~$N_{\leq d}(\Gamma;\R)$.
  \item The family~$(\alpha_i)_{i \in I_0 \sqcup I_1}$ is an $\R$-basis
    of~$H_{\leq d}(\Gamma;\R)$.
  \item The family~$(\varphi_i)_{i \in I_1}$ is an $\R$-basis
    of~$B^{\leq d}(\Gamma;\R)$.
  \item For all~$i,j \in I_1$, we have
    \[ \langle \varphi_i, \alpha_j \rangle = \delta_{ij}.
    \]
  \end{itemize}
  Similarly, we obtain such families~$(\psi_j)_{j \in J_1}$ and
  $(\beta_j)_{j \in J_1 \sqcup J_0}$ for~$\Lambda$.

  Let $\alpha \in N_d(\Gamma\times\Lambda;\R)$. Using the
  K\"unneth isomorphism, we write
  (where $I := I_1 \sqcup I_0$ and $J := J_1 \sqcup J_0$) 
  \[ \alpha = \sum_{(i,j) \in I \times J} \lambda_{ij} \cdot \alpha_i \times \beta_j
  \]
  for suitable real coefficients~$\lambda_{ij}$.
  Let $(i_1,j_1) \in I_1 \times J_1$. Then $\lambda_{i_1,j_1} = 0$
  as the following computation shows:
  \begin{align*}
    |\lambda_{i_1,j_1}|
    & = \Bigl|
    \Bigl\langle
    \varphi_{i_1} \times \psi_{j_1},
    \sum_{(i,j) \in I \times J} \lambda_{ij} \cdot \alpha_i \times \beta_j
    \Bigr\rangle
    \Bigr|
    \\
    & = \bigl|\langle \varphi_{i_1} \times \psi_{j_1},
    \alpha \rangle \bigr|
    \\
    & \leq \|\varphi_{i_1}\|_\infty \cdot \|\psi_{j_1}\|_\infty
    \cdot \|\alpha\|_1
    \\
    & = 0
  \end{align*}
  Therefore, $\alpha \in N$.
\end{proof}

\begin{lemma}[finite index supergroups]
  Let $d\in \N_{\geq 4}$ and let $\Gamma$ be a group that contains a
  finite index subgroup~$\Lambda$ with~$\Lambda \in
  \Gap(d)$. Then~$\Gamma \in \Gap(d)$.
\end{lemma}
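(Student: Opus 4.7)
The plan is to reduce the problem to $\Lambda$ via the transfer for the finite-index inclusion~$\iota\colon \Lambda \hookrightarrow \Gamma$ and then invoke Theorem~\ref{thm:ratchar}. Set $n := [\Gamma : \Lambda]$ and let $t\colon H_d(\Gamma;\R) \to H_d(\Lambda;\R)$ be the classical transfer associated with the $n$-sheeted covering $B\Lambda \to B\Gamma$; it satisfies $H_d(\iota;\R) \circ t = n \cdot \id$.

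First, I would verify that $\Gamma$ is of type~$\FH d$: finite presentability of~$\Lambda$ together with~$[\Gamma:\Lambda] < \infty$ forces finite presentability of~$\Gamma$, and the relation $H_d(\iota;\R) \circ t = n \cdot \id$ shows that $t$ is injective, so $H_d(\Gamma;\R)$ embeds into the finite-dimensional space~$H_d(\Lambda;\R)$.

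By Theorem~\ref{thm:ratchar}, it then suffices to show that $N_d(\Gamma;\R)$ is rational in~$H_d(\Gamma;\R)$. The key identity I would establish is
\[ N_d(\Gamma;\R) = H_d(\iota;\R)\bigl(N_d(\Lambda;\R)\bigr). \]
The inclusion ``$\supset$'' is immediate from functoriality of the $\ell^1$-semi-norm. For the reverse inclusion, pick $\alpha \in N_d(\Gamma;\R)$; the chain-level transfer coming from the $n$-sheeted cover gives $\|t(\alpha)\|_1 \leq n \cdot \|\alpha\|_1 = 0$, hence $t(\alpha) \in N_d(\Lambda;\R)$ and $\alpha = (1/n) \cdot H_d(\iota;\R)(t(\alpha)) \in H_d(\iota;\R)(N_d(\Lambda;\R))$.

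To conclude, $\Lambda \in \Gap(d)$ and Theorem~\ref{thm:ratchar} give that $N_d(\Lambda;\R)$ is rational. Since $H_d(\iota;\R)$ is induced by a group homomorphism, it is defined over~$\Z$ and sends rational subspaces to rational subspaces; thus $N_d(\Gamma;\R)$ is rational, and a final application of Theorem~\ref{thm:ratchar} yields~$\Gamma \in \Gap(d)$. The one step that requires genuine care is the chain-level transfer bound $\|t(\alpha)\|_1 \leq n \cdot \|\alpha\|_1$; although classical for finite coverings of classifying spaces, it is the only non-formal ingredient and merits an explicit citation.
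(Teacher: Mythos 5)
Your proposal is correct and follows essentially the same route as the paper: transfer for the finite-index inclusion, the identity $N_d(\Gamma;\R) = H_d(\iota;\R)(N_d(\Lambda;\R))$ proved via the norm bound $\|t(\alpha)\|_1 \leq [\Gamma:\Lambda]\cdot\|\alpha\|_1$ from the explicit chain-level transfer, and preservation of rationality under maps induced by group homomorphisms, framed by Theorem~\ref{thm:ratchar}. The paper's proof differs only in normalising $\widetilde\alpha = (1/[\Gamma:\Lambda])\cdot t_d(\alpha)$ before estimating, which is the same computation.
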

\begin{proof}
  By Theorem~\ref{thm:ratchar}, $N_d(\Lambda;\R)$ is rational and it
  suffices to show that $N_d(\Gamma;\R)$ is rational and that $\Gamma$
  has type~$\FH d$:

  Let $i \colon \Lambda \longrightarrow \Gamma$ denote the inclusion.
  Because $[\Gamma:\Lambda] < \infty$ and $[\Gamma:\Lambda]$ is a unit
  in~$\R$, there is a homological transfer map $t_d \colon
  H_d(\Gamma;\R) \longrightarrow H_d(\Lambda;\R)$, which satisfies
  \[ H_d(i;\R) \circ t_d = [\Gamma:\Lambda] \cdot \id_{H_d(\Gamma;\R)}.
  \]
  In particular, $\dim_\R H_d(\Gamma;\R) \leq \dim_\R H_d(\Lambda;\R)
  < \infty$.  Moreover, because $\Gamma$ contains a finitely presented
  subgroup of finite index (namely~$\Lambda$), also $\Gamma$ is
  finitely presented.

  We now show that
  $N_d(\Gamma;\R) = H_d(i;\R) \bigl(N_d(\Lambda;\R)\bigr)
  $: 
  Clearly, the right-hand side is contained in~$N_d(\Gamma;\R)$.
  Conversely, let $\alpha \in N_d(\Gamma;\R)$. We consider~$\widetilde
  \alpha := 1/[\Gamma:\Lambda] \cdot t_d(\alpha) \in H_d(\Lambda;\R)$.
  The explicit construction of the transfer~$t_d$ through lifts of singular
  simplices shows that
  \[ \|\widetilde \alpha\|_1
  \leq \frac1{[\Gamma:\Lambda]} \cdot [\Gamma:\Lambda] \cdot \|\alpha\|_1 = 0.
  \]
  Hence, $\widetilde \alpha \in N_d(\Lambda;\R)$. By construction,
  \[ \alpha = \frac1{[\Gamma:\Lambda]} \cdot H_d(i;\R) \bigl(t_d(\alpha)\bigr)
  = H_d(i;\R) (\widetilde \alpha).
  \]
  This proves the claimed description of~$N_d(\Gamma;\R)$. 

  Finally, because $H_d(i;\R)$ preserves rational subspaces, the
  rationality of the subspace~$N_d(\Lambda;\R)$ implies the rationality
  of~$N_d(\Gamma;\R)$.

  Alternatively, one could also use the cohomological transfer
  in (bounded) cohomology.
\end{proof}

\begin{lemma}[epis on bounded cohomology]\label{lem:bcepi}
  Let $d \in \N_{\geq 4}$, let $f \colon \Gamma \longrightarrow
  \Lambda$ be a group homomorphism that induces a
  surjection~$H^d_b(f;\R) \colon H^d_b(\Lambda;\R) \longrightarrow
  H^d_b(\Gamma;\R)$, let $\Lambda \in \Gap(d)$, and let $\Gamma$ be of
  type~$\FH d$.  Then~$\Gamma \in \Gap(d)$.
\end{lemma}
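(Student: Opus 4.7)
The plan is to reduce, via Theorem~\ref{thm:ratchar}, to checking rationality of $B^d(\Gamma;\R)$ in~$H^d(\Gamma;\R)$, starting from the known rationality of $B^d(\Lambda;\R)$ in~$H^d(\Lambda;\R)$. Since $\Gamma$ is assumed to be of type~$\FH d$, the characterisation \ref{i:ratbounded}. $\Longleftrightarrow$ \ref{i:gap}. applies to~$\Gamma$ once this rationality is established.

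The key tool will be the naturality square for the comparison map:
\[ \xymatrix{
  H^d_b(\Lambda;\R) \ar[r]^-{H^d_b(f;\R)} \ar[d]_{\comp^d_\Lambda}
  & H^d_b(\Gamma;\R) \ar[d]^{\comp^d_\Gamma}
  \\
  H^d(\Lambda;\R) \ar[r]_-{H^d(f;\R)} & H^d(\Gamma;\R).
} \]
The image of~$\comp^d_\Gamma$ is by definition~$B^d(\Gamma;\R)$. Using the hypothesis that~$H^d_b(f;\R)$ is surjective, a direct diagram chase gives
\[ B^d(\Gamma;\R)
  = \im \comp^d_\Gamma
  = \im\bigl(\comp^d_\Gamma \circ H^d_b(f;\R)\bigr)
  = \im\bigl(H^d(f;\R) \circ \comp^d_\Lambda\bigr)
  = H^d(f;\R)\bigl(B^d(\Lambda;\R)\bigr).
\]

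Finally, the map~$H^d(f;\R)$ is induced by a group homomorphism and hence arises by extension of scalars from~$H^d(f;\Q)$; in particular, it sends $H^d(\Lambda;\Q)$ into $H^d(\Gamma;\Q)$, so it sends rational subspaces to rational subspaces. Since $\Lambda \in \Gap(d)$ implies, via Theorem~\ref{thm:ratchar}, that $B^d(\Lambda;\R)$ is rational, its image $B^d(\Gamma;\R)$ is rational as well. Applying Theorem~\ref{thm:ratchar} once more (in the other direction) to the group~$\Gamma$, which is of type~$\FH d$ by hypothesis, we conclude that $\SVG d \Gamma$ has a gap at zero, i.e., $\Gamma \in \Gap(d)$.

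I do not expect a serious obstacle here: everything is driven by naturality of the comparison map, surjectivity of~$H^d_b(f;\R)$, and the elementary fact that $\Q$-linear maps preserve rationality. The only point that deserves a line of care is the identification $B^d(\Gamma;\R) = H^d(f;\R)(B^d(\Lambda;\R))$, which uses the surjectivity hypothesis in an essential way; without it one only gets one inclusion.
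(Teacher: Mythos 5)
Your proposal is correct and follows essentially the same route as the paper: reduce via Theorem~\ref{thm:ratchar} to rationality of $B^d(\Gamma;\R)$, use the naturality square for the comparison map together with surjectivity of $H^d_b(f;\R)$ to identify $B^d(\Gamma;\R) = H^d(f;\R)\bigl(B^d(\Lambda;\R)\bigr)$, and conclude since $H^d(f;\R)$ preserves rational subspaces. Your extra remark on why $Q$-defined maps preserve rationality is a welcome bit of added detail but does not change the argument.
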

\begin{proof}
  By Theorem~\ref{thm:ratchar}, $B^d(\Lambda;\R)$ is rational
  and it suffices to show that $B^d(\Gamma;\R)$ is rational.
  The commutative diagram
  \[ \xymatrix{%
    H^d_b(\Lambda;\R) \ar[r]^-{H^d_b(f;\R)}
    \ar[d]_{\comp^d_\Lambda}
    & H^d_b(\Gamma;\R)
    \ar[d]^{\comp^d_\Gamma}
    \\
    H^d(\Lambda;\R) \ar[r]_-{H^d(f;\R)}
    & H^d(\Gamma;\R)
    }
  \]
  and the surjectivity of the upper arrow~$H^d_b(f;\R)$ imply 
  that
  \[ B^d(\Gamma;\R) = H^d(f;\R) \bigl( B^d(\Lambda;\R)\bigr).
  \]
  As $B^d(\Lambda;\R)$ is rational in~$H^d(\Lambda;\R)$ and as
  the induced homomorphism~$H^d(f;\R)$ preserves rationality,
  we obtain that also $B^d(\Gamma;\R)$ is rational.
\end{proof}

\begin{lemma}[boundedly acyclic extensions]
  Let $d \in \N_{\geq 4}$, let $1 \longrightarrow A \longrightarrow
  \Gamma \longrightarrow \Lambda \longrightarrow 1$ be an extension of
  groups with boundedly acyclic kernel~$A$, let $\Lambda \in \Gap(d)$, 
  and let $\Gamma$ be of type~$\FH d$. Then $\Gamma \in \Gap(d)$.
\end{lemma}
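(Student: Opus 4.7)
The plan is to reduce this statement to Lemma~\ref{lem:bcepi} by verifying that the quotient homomorphism $\pi \colon \Gamma \longrightarrow \Lambda$ induces a surjection on bounded cohomology in degree~$d$.

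First, I would invoke the bounded cohomology version of the mapping theorem for extensions with boundedly acyclic kernel: for any group extension $1 \longrightarrow A \longrightarrow \Gamma \longrightarrow \Lambda \longrightarrow 1$ with $H^k_b(A;\R)=0$ for all $k \geq 1$, the induced map
\[ H^*_b(\pi;\R) \colon H^*_b(\Lambda;\R) \longrightarrow H^*_b(\Gamma;\R) \]
is an isomorphism. This generalises Gromov's mapping theorem (which covers the case of amenable kernels~\cite{vbc,ivanov}) and can be deduced, for instance, from the Hochschild--Serre type spectral sequence in bounded cohomology or from Monod's functorial framework. In particular, $H^d_b(\pi;\R)$ is surjective.

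Given this, the hypotheses of Lemma~\ref{lem:bcepi} are satisfied with $f := \pi$: the map $H^d_b(\pi;\R)$ is surjective, $\Lambda \in \Gap(d)$, and $\Gamma$ is of type~$\FH d$ by assumption. Applying Lemma~\ref{lem:bcepi} therefore yields $\Gamma \in \Gap(d)$, as required.

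The only non-trivial input is the isomorphism statement for boundedly acyclic kernels; everything else reduces to a direct citation of Lemma~\ref{lem:bcepi}. Once that mapping-theorem-style input is in place, there is no further work to do.
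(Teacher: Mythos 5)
Your proposal is correct and follows exactly the paper's own argument: the quotient map induces an isomorphism on bounded cohomology because the kernel is boundedly acyclic (via the Hochschild--Serre spectral sequence in bounded cohomology), and then Lemma~\ref{lem:bcepi} applies. No differences worth noting.
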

\begin{proof}
  Let $\pi \colon \Gamma \longrightarrow \Lambda$ be the epimorphism
  of the given short exact sequence. Because $\ker \pi \cong A$ is
  boundedly acyclic, the map~$H^d_b(\pi;\R) \colon H^d_b(\Lambda;\R)
  \longrightarrow H^d_b(\Gamma;\R)$ is an
  isomorphism; this can be seen from the Hochschild--Serre
  spectral sequence in bounded cohomology~\cite[Chapter~12]{monod}. 
  Therefore, Lemma~\ref{lem:bcepi} applies.
\end{proof}

%%%%%%%%%%%%%%%%%%%%%%%%%%%%%%%%%%%%%%%%%%%%%%%%%%%%%%%%%%%%%%%%%%%%
% for mathz:
\section*{Statements and declarations}

\emph{Funding.} 
This work was supported by the CRC~1085 \emph{Higher Invariants} (Universit\"at
Regensburg, funded by the DFG).

\noindent
\emph{Conflicts of interest.}
There are no conflicts of interest to declare.

\noindent
\emph{Author contribution.}
This is a single-author paper. The contribution is~$100\%$.

\noindent
\emph{Data availability statement.}
No external data is processed in this project.

%%%%%%%%%%%%%%%%%%%%%%%%%%%%%%%%%%%%%%%%%%%%%%%%%%%%%%%%%%%%%%%%%%%%%
%%%Bibliography:
{\small
\bibliographystyle{alpha}
\bibliography{bib_l1}}

%%%%%%%%%%%%%%%%%%%%%%%%%%%%%%%%%%%%%%%%%%%%%%%%%%%%%%%%%%%%%%%%%%%%%
% author info
\vfill

\noindent
\emph{Clara L\"oh}\\[.5em]
  {\small
  \begin{tabular}{@{\qquad}l}
    Fakult\"at f\"ur Mathematik,
    Universit\"at Regensburg,
    93040 Regensburg\\
    %Germany\\
    \textsf{clara.loeh@mathematik.uni-r.de}, 
    \textsf{https://loeh.app.ur.de}
  \end{tabular}}

\end{document}